\newtheorem{theorem}{Theorem}[section]
\newtheorem{proposition}[theorem]{Proposition}
\newtheorem{lemma}[theorem]{Lemma}
\newtheorem{definition}[theorem]{Definition}
\newtheorem{claim}{Claim}[theorem]
\begin{document}

\title{Circular coloring of  signed graphs}

\vspace{3cm}

\author{Yingli Kang \thanks{Fellow of the International Graduate School ``Dynamic Intelligent Systems''; yingli@mail.upb.de}, Eckhard Steffen\thanks{
		Paderborn Institute for Advanced Studies in
		Computer Science and Engineering,
		Paderborn University,
		Warburger Str. 100,
		33098 Paderborn,
		Germany;			
		es@upb.de}}

\date{}

\maketitle

\begin{abstract}
Let $k, d$ ($2d \leq k)$ be two positive integers. We generalize the well studied notions of $(k,d)$-colorings and of the 
circular chromatic number $\chi_c$ to signed graphs. This implies a new notion of colorings of signed graphs,
and the corresponding chromatic number $\chi$.  
Some basic facts on circular colorings of signed graphs and on the circular chromatic number are proved, and differences to
the results on unsigned graphs are analyzed. In particular, we show that the difference between the circular chromatic number and the chromatic number of
a signed graph is at most 1. Indeed, there are signed graphs where the difference is 1. On the other hand,
for a signed graph on $n$ vertices, if the difference is smaller than 1, then there exists $\epsilon_n>0$, such that
the difference is at most $1 - \epsilon_n$.

We also show that notion of $(k,d)$-colorings is equivalent to $r$-colorings (see \cite{Zhu_2005} 
(X.~Zhu, Recent developments in circular coloring of graphs, in Topics in Discrete Mathematics
Algorithms and Combinatorics Volume {\bf 26}, Springer Berlin Heidelberg (2006) 497-550)).
\end{abstract}

\section{Introduction} \label{Intro}
Graphs in this paper are simple and finite.  The vertex set of a graph $G$ is denoted by $V(G)$, and the edge set by $E(G)$.
A signed graph $(G,\sigma)$ is a graph $G$ and a function $\sigma : E(G) \rightarrow \{ \pm 1 \}$,
which is called a signature of $G$. The set $N_{\sigma} = \{e : \sigma(e) = -1\}$ is the set of negative edges of $(G,\sigma)$ and 
$E(G) - N_{\sigma}$ the set of positive edges. For $v \in V(G)$, let $E(v)$ be the set of edges which are incident to $v$. 
A switching at $v$ defines a graph $(G,\sigma')$ with $\sigma'(e) = -\sigma(e)$ for $e \in E(v)$ and $\sigma'(e) = \sigma(e)$ otherwise. 
Two signed graphs $(G,\sigma)$ and $(G,\sigma^*)$ are {\em equivalent} if they can be obtained from each other by a sequence of switchings. We also say that
$\sigma$ and $\sigma^*$ are equivalent signatures of $G$. 

A circuit in $(G,\sigma)$ is balanced, if it contains an even number of negative edges; otherwise it is unbalanced. The graph
$(G,\sigma)$ is unbalanced, if it contains an unbalanced circuit; otherwise $(G,\sigma)$ is balanced. 
It is well known (see e.g.~\cite{Raspaud_Zhu_2011})
that $(G,\sigma)$ is balanced if and only if it is equivalent to the signed graph with no negative edges, and $(G,\sigma)$ is antibalanced if
it is equivalent to the signed graph with no positive edges. 
Note, that a balanced bipartite graph is also antibalanced. The underlying unsigned graph of $(G,\sigma)$ is denoted by $G$. 

In the 1980s Zaslavsky \cite{Zaslavsky_1982, Zaslavsky_1982_2, Zaslavsky_1984} started studying vertex colorings of signed graphs. 
The natural constraints for a coloring $c$ of a signed graph $(G,\sigma)$ are, that $c(v) \not= \sigma(e) c(w)$ for each edge $e=vw$,
and that the colors can be inverted under switching, i.e.~equivalent signed graphs have the same chromatic number. 
In order to guarantee these properties of a coloring, 
Zaslavsky \cite{Zaslavsky_1982} used the set $\{-k, \dots, 0, \dots ,k\}$ of $2k+1$ "signed colors" and studied the 
interplay between colorings and zero-free colorings through the chromatic polynomial.

Recently, M\'a\v{c}ajov\'a, Raspaud, and \v{S}koviera \cite{MRS_2014}
modified this approach. If $n = 2k+1$, then let $M_n = \{0, \pm 1, \dots,\pm k\}$, and
if  $n = 2k$, then let $M_n = \{\pm 1, \dots,\pm k\}$. A mapping $c$ from $V(G)$ to $M_n$
is a $n$-coloring of $(G,\sigma)$, if $c(v) \not= \sigma(e) c(w)$ for each edge $e=vw$. They 
defined $\chi_{\pm}((G,\sigma))$ to be the smallest number $n$ such that $(G,\sigma)$ has a $n$-coloring.

Since every element of an additive abelian groups has an inverse element, it is natural to choose the elements of an additive abelian group as colors for a coloring of signed
graphs. The self-inverse elements of the group play a crucial role in the colorings, since the induced color classes are independent sets. Hence, the 
following statement is true.

\begin{proposition} Let $G$ be a  graph and $\chi(G) = k$. If ${\cal C}$ is a set of $k$ pairwise different self-inverse elements of an abelian
group (e.g.~of $\mathbb{Z}_2^n$ $(k \leq 2^n)$), then every $k$-coloring of $G$ with colors from ${\cal C}$  is a $k$-coloring of $(G,\sigma)$, for every signature $\sigma$ of $G$. 
In particular, the chromatic number of $(G,\sigma)$ with respect to ${\cal C}$ is $k$. 
\end{proposition}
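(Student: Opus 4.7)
The plan is to exploit the single algebraic observation that for a self-inverse element $x$ of an abelian group one has $x = -x$, so that $\sigma(e)\, c(w) = c(w)$ regardless of whether $\sigma(e) = +1$ or $\sigma(e) = -1$. This collapses the signed-edge constraint $c(v) \neq \sigma(e)\, c(w)$ into the ordinary constraint $c(v) \neq c(w)$ whenever all colors used are self-inverse.

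First I would fix an arbitrary signature $\sigma$ of $G$ and an arbitrary proper $k$-coloring $c : V(G) \to \mathcal{C}$ of the underlying graph $G$. For an edge $e = vw$, since $c(w) \in \mathcal{C}$ is self-inverse, $\sigma(e)\, c(w) \in \{c(w), -c(w)\} = \{c(w)\}$, and properness of $c$ on $G$ gives $c(v) \neq c(w) = \sigma(e)\, c(w)$. Hence $c$ satisfies the defining condition for a coloring of $(G,\sigma)$ with the $k$ colors of $\mathcal{C}$, which proves the first assertion.

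For the ``in particular'' clause, I would argue both inequalities. The upper bound on the chromatic number of $(G,\sigma)$ with respect to $\mathcal{C}$ is immediate from the first part, since any proper $k$-coloring of $G$ using $\mathcal{C}$ (which exists because $\chi(G) = k$ and $|\mathcal{C}| = k$) serves as a coloring of $(G,\sigma)$. For the lower bound I would run the same collapse in reverse: any coloring of $(G,\sigma)$ that uses only colors from $\mathcal{C}$ is in particular a map $c : V(G) \to \mathcal{C}$ satisfying $c(v) \neq \sigma(e)\, c(w) = c(w)$ for every edge, hence a proper coloring of $G$; therefore it must use at least $\chi(G) = k$ distinct colors.

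There is really no obstacle here: the entire argument rests on the trivial identity $\sigma(e)\, c(w) = c(w)$ for self-inverse $c(w)$, and the hypothesis $|\mathcal{C}| = k = \chi(G)$ just guarantees that a witnessing proper $k$-coloring of $G$ using exactly the palette $\mathcal{C}$ exists. The only thing worth flagging is that the proposition is a statement about colorings restricted to the palette $\mathcal{C}$; without that restriction the chromatic number of $(G,\sigma)$ in the sense of \cite{MRS_2014} could be smaller than $k$, so the phrasing ``chromatic number of $(G,\sigma)$ with respect to $\mathcal{C}$'' should be read literally.
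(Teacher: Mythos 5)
Your proof is correct and matches the paper's reasoning: the paper states this proposition without a detailed proof, justifying it precisely by the observation you make explicit, namely that for self-inverse colors $\sigma(e)c(w)=c(w)$, so the signed constraint collapses to ordinary properness and the color classes are independent sets. Your added care with both directions of the ``in particular'' clause (and the remark that the chromatic number is taken with respect to the palette $\mathcal{C}$) is a faithful elaboration of the same argument.
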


\subsection{$(k,d)$-colorings of signed graphs}

A coloring parameter, where the colors are also the elements of an abelian group, namely the cyclic group of integer modulo $n$, 
and where the coloring properties are defined by using operations within the group, is the circular chromatic number. This 
parameter was introduced by Vince \cite{Vince_1988} in 1988. We combine these two approaches to define the circular chromatic number of a signed graph. 
For $x \in \mathbb{R}$ and a positive real number $r$, we denote by $[x]_r$, the remainder of $x$ divided by $r$, and define $|x|_r = \min\{[x]_r, [-x]_r\}$. Hence, $[x]_r \in [0,r)$ and $|x|_r=|-x|_r$.

Let $\mathbb{Z}_n$ denote the cyclic group of integers modulo $n$, $\mathbb{Z}/n\mathbb{Z}$.
Let $k$ and $d$ be positive integers such that $k \geq 2d$. A $(k,d)$-{\em coloring} of a signed graph $(G,\sigma)$ is a 
mapping $c: V(G) \mapsto \mathbb{Z}_k$ such that for each edge $e$ with $e=vw$: $d \leq |c(v) - \sigma(e)c(w)|_k$. 
The {\em circular chromatic number} $\chi_c((G,\sigma))$ is $\inf\{\frac{k}{d} : (G,\sigma) \mbox{ has a } (k,d)\mbox{-coloring}\}$.
The minimum $k$ such that $(G,\sigma)$ has a $(k,1)$-coloring is the chromatic number of $(G,\sigma)$ and
it is denoted by $\chi((G,\sigma))$.

\begin{proposition} \label{switching}
Let $k, d$ be positive integers, $(G,\sigma)$ be a signed graph and $c$ be a $(k,d)$-coloring of $(G,\sigma)$. If $(G,\sigma)$ and $(G,\sigma')$ are equivalent,
then there is a $(k,d)$-coloring $c'$ of $(G,\sigma')$. In particular, $\chi_c((G,\sigma)) = \chi_c((G,\sigma'))$.
\end{proposition}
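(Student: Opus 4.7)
The plan is to reduce to the case of a single switching and construct the coloring $c'$ by negating the color at the switched vertex.

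First I would observe that by definition, if $(G,\sigma)$ and $(G,\sigma')$ are equivalent, then there is a finite sequence of switchings transforming $\sigma$ into $\sigma'$. Hence by induction on the length of this sequence, it suffices to prove the statement when $(G,\sigma')$ is obtained from $(G,\sigma)$ by a single switching at some vertex $v$.

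In this setting I would define $c'\colon V(G)\to\mathbb{Z}_k$ by
\[
c'(v) = -c(v), \qquad c'(u) = c(u) \text{ for all } u \in V(G)\setminus\{v\},
\]
where the negation is taken modulo $k$. Now I would verify the coloring condition edge by edge. For any edge $e = uw$ with $v \notin \{u,w\}$, nothing changes: $\sigma'(e) = \sigma(e)$ and $c'(u) = c(u)$, $c'(w) = c(w)$, so $|c'(u) - \sigma'(e) c'(w)|_k = |c(u) - \sigma(e) c(w)|_k \geq d$. For any edge $e = vw$ incident to $v$, switching flips its sign, so $\sigma'(e) = -\sigma(e)$, and
\[
|c'(v) - \sigma'(e) c'(w)|_k = |-c(v) + \sigma(e) c(w)|_k = |c(v) - \sigma(e) c(w)|_k \geq d,
\]
using the identity $|x|_k = |-x|_k$ built into the definition of $|\cdot|_k$. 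This shows $c'$ is a $(k,d)$-coloring of $(G,\sigma')$.

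For the statement about $\chi_c$, I would note that the switching relation is symmetric: the same construction applied to $(G,\sigma')$ recovers a $(k,d)$-coloring of $(G,\sigma)$. Therefore the sets $\{k/d : (G,\sigma) \text{ has a } (k,d)\text{-coloring}\}$ and $\{k/d : (G,\sigma') \text{ has a } (k,d)\text{-coloring}\}$ coincide, so their infima are equal and $\chi_c((G,\sigma)) = \chi_c((G,\sigma'))$. There is no real obstacle here; the only point requiring care is verifying that the symmetry $|x|_k = |-x|_k$ interacts correctly with the sign flip on all edges incident to the switched vertex, which is exactly what the definition is designed to accommodate.
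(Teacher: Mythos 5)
Your proof is correct and follows essentially the same route as the paper: reduce to a single switching at a vertex and negate the color there, checking the coloring condition edge by edge using $|x|_k=|-x|_k$. No issues.
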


\begin{proof}
Let $x \in V(G)$ and $(G,\sigma')$ be obtained from $(G,\sigma)$ by a switching at $x$. Define $c' : V(G) \rightarrow \mathbb{Z}_k$ with $c'(v) = c(v)$, if $v \not = x$, and $c'(x) = - c(x)$. 
For every edge $e$ with $e=uw$: If $x \not \in \{u,w\}$, then $|c(u) - \sigma(e)c(w)|_k = |c'(u) - \sigma'(e)c'(w)|_k$, and if $x \in \{u,w\}$, say $x=w$, then 
$|c'(u) - \sigma'(e)c'(w)|_k =  |c(u) - (-\sigma(e))(-c(w))|_k = |c(u) - \sigma(e)c(w)|_k$. Hence, $c'$  is a $(k,d)$-coloring of $(G,\sigma')$, and therefore,
$\chi_c((G,\sigma)) = \chi_c((G,\sigma'))$.             
\end{proof}

Note, that if $(G,\sigma)$ has a $(k,d)$-coloring, then by switching we can obtain an equivalent graph $(G,\sigma')$ and a $(k,d)$-coloring $c'$ on $(G,\sigma')$ such that
$c'(v) \in \{0, 1, \dots, \lfloor \frac{k}{2} \rfloor\}$ for each $v \in V(G)$. 
We will show that the circular chromatic number is a minimum; i.e.~if $\chi_c((G,\sigma))=\frac{k}{d}$, then there exists a $(k,d)$-coloring of $(G,\sigma)$. 
Furthermore, $\chi((G,\sigma)) - 1 \leq \chi_c((G,\sigma)) \leq \chi((G,\sigma))$ for $(G,\sigma)$. In contrast to the corresponding result for unsigned graphs we show, that
for each even $k$ there are signed graphs with circular chromatic number $k$ and chromatic number $k+1$, i.e.~they do not have a $(k,1)$-coloring.
On the other hand, for a signed graph on $n$ vertices, if the difference between these parameters is smaller than 1, then there exists $\epsilon_n>0$, such that
the difference is at most $1 - \epsilon_n$.
The proofs of the main results of this paper follow the approach of Bondy and Hell \cite{Bondy_Hell_1990} for similar results for unsigned graphs.

\subsection{$r$-colorings of signed graphs}

The name "circular coloring" was introduced by Zhu \cite{Zhu_1992}, and motivated by the equivalence of $(k,d)$-colorings to $r$-colorings. 
In section \ref{equivalence} we will show that this is also true in the context of signed graphs. 
Let $(G,\sigma)$ be a signed graph and $r$ be a real number at least 1. A {\em circular} $r$-{\em coloring} of $(G,\sigma)$
is a function $f : V(G) \rightarrow [0,r)$ such that for any edge $e$ with $e=xy$: 
if $\sigma(e) = 1$, then $1 \leq |f(x) - f(y)| \leq r-1$, and 
if $\sigma(e) = -1$, then $1 \leq |f(x)+ f(y) - r| \leq r-1$.
Clearly, if we identify 0 and $r$ of the interval $[0,r]$ into a single point, then we obtain a circle with perimeter $r$. Let $S^r$ be this circle. 
The colors are the points on $S^r$, and the distance 
between two points $a, b$ of $S^r$ is the shorter arc of $S^r$ connecting $a$ and $b$, which is $|a-b|_r$.  
For $a \in S^r$ let $ r-a$ be the inverse element of $a$. By this notation, a circular $r$-coloring of $(G,\sigma)$ is a function that assigns 
if $\sigma(e) = 1$, then $1 \leq |f(x) - f(y)|_r$, and if $\sigma(e) = -1$, then $1 \leq |f(x)+ f(y)|_r$. 
Note, that this definition also respects switchings. Let $f$ be an $r$-coloring of $(G,\sigma)$ and $(G,\sigma')$ be obtained from $(G,\sigma)$ by a switching at $v \in V(G)$. Then
$f'$ with $f'(x) = f(x)$ if $x \in V(G) \setminus \{v\}$ and $f'(v) = r-f(v)$ is an $r$-coloring of $(G,\sigma')$. 
As above we deduce that there is always a coloring on an equivalent graph of $(G,\sigma)$, which only uses colors in the interval $[0,\frac{r}{2}]$.

We will show in section \ref{equivalence} that $\chi_c((G,\sigma)) = \inf\{r : (G,\sigma) \mbox{ has an } r\mbox{-coloring}\}$.

The circular chromatic number and $r$-colorings seem to be a very natural notion for the coloring of signed graphs. 
The color set $S^r$ has always two self-inverse elements, namely $0$ and $\frac{r}{2}$.

\section{Basic properties of $(k,d)$-colorings and $\chi_c$} \label{basic_facts}

\begin{lemma}\label{gcd at least 3} Let $d, k, t$ be positive integers with $gcd(k,d)=1$ and $t\geq 3$, and let $(G,\sigma)$ be a signed graph. 
If $(G,\sigma)$ has a $(tk,td)$-coloring, then it has a $(tk-2k,td-2d)$-coloring.
\end{lemma}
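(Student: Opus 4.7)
The plan is to adapt the approach of Bondy and Hell \cite{Bondy_Hell_1990} from unsigned to signed graphs. Given a $(tk,td)$-coloring $c\colon V(G)\to \mathbb{Z}_{tk}$ of $(G,\sigma)$, I would build a \emph{compression} map $\phi\colon \mathbb{Z}_{tk}\to\mathbb{Z}_{(t-2)k}$ and let $c':=\phi\circ c$ be the candidate $(tk-2k,td-2d)$-coloring.

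For $c'$ to respect the edge constraints of $(G,\sigma)$, I ask $\phi$ to satisfy two properties: (i) \emph{oddness}, $\phi(-x)=-\phi(x)$ in $\mathbb{Z}_{(t-2)k}$, and (ii) \emph{distance contraction}, $|x-y|_{tk}\ge td \Rightarrow |\phi(x)-\phi(y)|_{(t-2)k}\ge (t-2)d$. Property (ii) directly gives the positive-edge constraint on $c'$. For a negative edge $e=uv$ of $(G,\sigma)$, the bound $|c(u)+c(v)|_{tk}\ge td$ rewrites as $|c(u)-(-c(v))|_{tk}\ge td$; applying (ii) then (i) turns this into $|c'(u)+c'(v)|_{(t-2)k}\ge (t-2)d$. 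The oddness requirement precisely explains why the step size is $2$ rather than the $1$ of Bondy--Hell: admissible ``deleted'' colors in $\mathbb{Z}_{tk}$ must come in antipodal pairs $\{a,-a\}$, so the smallest feasible compression removes $2k$ points at a time.

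The construction of $\phi$ is the main technical step, and is where the hypothesis $\gcd(k,d)=1$ enters. I would pick a symmetric subset $A\subset \mathbb{Z}_{tk}$ of size $2k$ (so $-A=A$) satisfying the arc-counting condition that every arc of length $td$ in $\mathbb{Z}_{tk}$ meets $A$ in exactly $2d$ points; then $\phi$ is the inversion-respecting, order-preserving bijection $\mathbb{Z}_{tk}\setminus A\to \mathbb{Z}_{(t-2)k}$, extended to $A$ by sending each $a\in A$ to the image of a chosen non-$A$ neighbor. For $t$ even one may take $A$ to be an arithmetic progression of step $t/2$, shifted so as to avoid the fixed points $0$ and $tk/2$ of negation (otherwise the bijection of the non-$A$ part cannot be made odd); for $t$ odd one interlaces two step-$t$ progressions with symmetric offsets $\{a,t-a\}$, and $\gcd(k,d)=1$ is what guarantees both the symmetry $-A=A$ and the exact arc-counting.

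The main obstacle is verifying the distance contraction (ii) in full generality, especially for pairs $x,y$ at long distances and pairs with one or both endpoints in $A$. For $x,y\notin A$ one shows that $|\phi(x)-\phi(y)|_{(t-2)k}=\min(L-\alpha_1,\,(t-2)k-L+\alpha_1)$, where $L=|x-y|_{tk}$ and $\alpha_1$ is the number of $A$-points in the shorter arc between $x$ and $y$: the first quantity is controlled by the upper arc-count $\alpha_1\le 2d$, while the second uses the complementary lower bound together with $k\ge 2d$ to avoid that the ``long'' arc in $\mathbb{Z}_{(t-2)k}$ becomes too short. The extension of $\phi$ to $A$ must then be tuned so as not to ruin either bound for edges whose endpoints get colors in $A$; this is the point where the argument is most delicate and where the coprimality of $k$ and $d$ is used repeatedly to align the various congruence classes, mirroring the role of coprimality in Bondy--Hell's original rotation trick.
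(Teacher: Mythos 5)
Your proposal is essentially the paper's proof: delete a negation-symmetric set $A$ of $2k$ colors that meets every length-$td$ interval of $\mathbb{Z}_{tk}$ in exactly $2d$ elements, push each deleted color onto a neighbouring surviving color, and compress order-preservingly onto $\mathbb{Z}_{(t-2)k}$. The paper's concrete choice is $A=A_1\cup A_{t-1}=\{x: x\equiv \pm 1 \pmod t\}$ (your ``two interlaced step-$t$ progressions with symmetric offsets'', with $a=1$), used uniformly for all $t\geq 3$; elements of $A_1$ are recolored by $x\mapsto x-1$ and of $A_{t-1}$ by $x\mapsto x+1$. Two small corrections: first, your even-$t$ alternative of a single arithmetic progression of step $t/2$ cannot be made symmetric while avoiding the self-inverse colors $0$ and $tk/2$ when $t\equiv 2\pmod 4$ (symmetry forces $t/2\mid 2s$, and for $t/2$ odd this forces $t/2\mid s$, hence $0\in A$), so you should simply use the two-progression set for every $t$, as the paper does. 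Second, the hypothesis $\gcd(k,d)=1$ does not in fact carry the weight you assign to it here: the symmetry of $A_1\cup A_{t-1}$ and the exact count of $2d$ elements per interval of length $td$ hold regardless, so coprimality plays no essential role in this particular lemma.
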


\begin{proof}
For $i\in\{0,1,\cdots, t-1\}$, let $A_i=\{i, i+t, i+2t,\cdots, i+(k-1)t\}$.
Clearly, $A_0,\ldots,A_{t-1}$ are $t$ pairwise disjoint sets of colors whose union is exactly the color set $\mathbb{Z}_{tk}$.
We shall recolor each color in both sets $A_1$ and $A_{t-1}$ by a color in set $A_0$ as follows: for $i\in A_1$, recolor $i$ by $i-1$, and for $i\in A_{t-1}$, recolor $i$ by $i+1$. We obtain a 
new $(tk,td)$-coloring of $(G,\sigma)$ in which no vertex receives a color from $A_1\cup A_{t-1}$. Define $k'=tk-2k$. Since the colors in the set $A_1\cup A_{t-1}$ are not used, we define a new coloring by renaming colors by elements of $\mathbb{Z}_{k'}$. Change 
color $x$ (from $\mathbb{Z}_{tk}$) to $x-|\{y\colon\ y\in A_1\cup A_{t-1}~\text{and}~y<x\}|$ (interpreted as element in $\mathbb{Z}_{k'}$) to obtain a mapping $\phi': V\rightarrow \mathbb{Z}_{k'}$. 
Let $d'=td-2d$. We claim that $\phi'$ is a $(k',d')$-coloring of $(G,\sigma)$.
Denote by $I_j$ the set $\{j,j+1,\ldots, j+td-1\}$ which is an interval of $\mathbb{Z}_{tk}$. Each interval $I_j$ contains exactly $2d$ elements of $A_1\cup A_{t-1}$, and any pair 
of mutually inverse elements of $\mathbb{Z}_{tk}$ has been recolored by a pair of mutually inverse elements of $\mathbb{Z}_{k'}$. It follows that $\phi'$ is a $(k',d')$-coloring of $(G,\sigma)$, as required.
\end{proof}

By the rearrangement theorem of group theory we have

\begin{lemma}\label{algebra}
Let $k,d$ and $x$ be three integers with $k,d>0$ and $gcd(k,d)=1$.
If $A=\{0,1,\ldots,k-1\}$ and
$B=\{[x+id]_k\colon\ i\in A\}$,
then $A=B$.
\end{lemma}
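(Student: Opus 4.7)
The plan is to show that $B$ is a subset of $\mathbb{Z}_k$ with exactly $k$ distinct elements; since $A = \mathbb{Z}_k$ already has cardinality $k$, the equality $A = B$ follows from a cardinality comparison.

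First I would observe that, by definition, every element $[x+id]_k$ of $B$ lies in $\{0,1,\ldots,k-1\} = A$, so $B \subseteq A$. The entire content of the lemma is therefore to establish that the $k$ elements $[x+id]_k$ for $i \in A$ are pairwise distinct.

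To prove distinctness, I would assume $[x+id]_k = [x+jd]_k$ for some $i, j \in A$ and deduce $(i-j)d \equiv 0 \pmod k$. Because $\gcd(k,d) = 1$, the integer $d$ is invertible modulo $k$, which forces $i \equiv j \pmod k$. Since $i, j \in \{0, 1, \ldots, k-1\}$, this means $i = j$. Hence $|B| = k = |A|$, and combined with $B \subseteq A$ this yields $A = B$.

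There is no real obstacle here: the statement is just the standard observation (sometimes called the rearrangement lemma) that multiplication by a unit of $\mathbb{Z}_k$ permutes the group, followed by a translation by $x$. The only delicate point worth being explicit about is the use of $\gcd(k,d) = 1$ to cancel $d$ from the congruence $(i-j)d \equiv 0 \pmod k$, which is why the coprimality hypothesis is essential.
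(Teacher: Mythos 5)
Your proof is correct and is exactly the standard argument the paper invokes: it states the lemma as a consequence of the rearrangement theorem of group theory, which is precisely the injectivity-plus-cardinality reasoning you spell out (cancelling $d$ modulo $k$ using $\gcd(k,d)=1$). No discrepancy with the paper's approach.
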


\begin{definition} \label{update}
Let $c$ be a $(k,d)$-coloring of a signed graph $(G,\sigma)$ in which colors $x_0$ and its inverse $k-x_0$ are missing.
{\bf Updating $c$ at $x_0$} is defined as follows: if the color $[x_0+d]_k$ appears in $c$, then recolor $[x_0+d]_k$ by $[x_0+d-1]_k$; and meanwhile, if the color $[k-x_0-d]_k$ appears in $c$, then recolor $[k-x_0-d]_k$ by $[k-x_0-d+1]_k$. 
Let $r$ be a positive integer.
Updating $c$ at a sequence of colors $x_0,[x_0+d]_k,\ldots,[x_0+(r-1)d]_k$ is called \emph{updating $c$ from $x_0$ by $r$ steps}. We also say that a function $c'$ is obtained from $c$
by updating at $x_0$ (in $r$ steps) if $c'$ is the final function from $V(G)$ to $\mathbb{Z}_{k}$ in this process.
\end{definition}

Let $k, d$ be two positive integers and $P(k,d)=\{\frac{1}{2}(k-2d+1), \frac{1}{2}(k-d+1), \frac{1}{2}(2k-d+1)\}$. Clearly, 
if both $k$ and $d$ are even, then $\mathbb{Z}_k\cap P(k,d)=\emptyset$; otherwise, $|\mathbb{Z}_k\cap P(k,d)|=2$.

\begin{lemma} \label{P(k,d)}
	Let $(G,\sigma)$ be a signed graph, $c$ be a $(k,d)$-coloring of $(G,\sigma)$, and let $c'$ be obtained from $c$ by updating at $x_0$. 
	Either $x_0\notin P(k,d)$ or both $[x_0+d]_k$ and $[k-x_0-d]_k$ are not used in $c$ if and only if $c'$ is a $(k,d)$-coloring of $(G,\sigma)$ in which the colors $x_0, [x_0+d]_k, [k-x_0]_k$ and $[k-x_0-d]_k$ are not used.
\end{lemma}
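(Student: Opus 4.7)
My plan is to establish the dictionary that organizes the whole proof: the update replaces every vertex of color $[x_0+d]_k$ by $[x_0+d-1]_k$ and every vertex of color $[k-x_0-d]_k$ by $[k-x_0-d+1]_k$, and the two target colors form a mutually inverse pair in $\mathbb{Z}_k$. The set $P(k,d)$ captures exactly those $x_0$ for which a target collides with an already-forbidden color: the equation $2x_0+d\equiv 1\pmod{k}$ (whose solutions in $\mathbb{Z}_k$ are the second and third elements of $P(k,d)$) is equivalent to $[x_0+d-1]_k=[k-x_0]_k$, and $2x_0+2d\equiv 1\pmod{k}$ (the first element) is equivalent to $[x_0+d-1]_k=[k-x_0-d]_k$.

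For the direction $(\Leftarrow)$, if both $[x_0+d]_k$ and $[k-x_0-d]_k$ are unused in $c$ then $c'=c$ and the conclusion is immediate. Otherwise assume $x_0\notin P(k,d)$. First, the dictionary shows that the targets $[x_0+d-1]_k$ and $[k-x_0-d+1]_k$ are distinct from $\{x_0,[k-x_0]_k\}$ (since $2x_0+d\not\equiv 1$) and from $\{[x_0+d]_k,[k-x_0-d]_k\}$ (since $2x_0+2d\not\equiv 1$), so the four colors $x_0, [x_0+d]_k, [k-x_0]_k, [k-x_0-d]_k$ are absent from $c'$. Second, for validity I would partition the edges $e=uv$ by how many endpoints are recolored. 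An edge with no recolored endpoint is unaffected. For an edge with exactly one recolored endpoint the distance shifts by $\pm 1$, and any violation would force the other endpoint to carry one of the forbidden colors $x_0$ or $[k-x_0]_k$, which is impossible. For an edge with both endpoints recolored, the signature must be $-1$ (when the two colors coincide) or $+1$ (when they differ), and in every case the new relevant quantity is $|2(x_0+d)-2|_k$ against the old $|2(x_0+d)|_k\geq d$; writing $z=[2(x_0+d)]_k$, the only values that could push the new quantity below $d$ are $z=d+1$, excluded by $x_0\notin P(k,d)$, and $z=d$, which is equivalent to $[x_0+d]_k=[k-x_0]_k$ -- but then this color class is empty, so no such edge exists.

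For the direction $(\Rightarrow)$ I would argue the contrapositive. Assume $x_0\in P(k,d)$ and at least one of $[x_0+d]_k,[k-x_0-d]_k$ is used in $c$. When $2x_0+d\equiv 1\pmod{k}$, the update assigns some vertex the color $[k-x_0]_k$ or $x_0$, contradicting the absence of these colors in $c'$; when $2x_0+2d\equiv 1\pmod{k}$, the update permutes the pair $\{[x_0+d]_k,[k-x_0-d]_k\}$, so at least one of them still appears in $c'$. The main technical subtlety, and the step I would guard most carefully, is the two-endpoint case of the validity check: one must recognize that the apparent trouble at $z=d$ is vacuous (because $[x_0+d]_k$ is then equal to the unused color $[k-x_0]_k$, forcing the color class to be empty), so that the genuine obstruction is confined to $z=d+1$, which is exactly what the second and third elements of $P(k,d)$ pick out.
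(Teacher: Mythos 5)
Your proposal is correct and takes essentially the same route as the paper: the same translation of $P(k,d)$ into the congruences $2x_0+d\equiv 1$ and $2x_0+2d\equiv 1 \pmod{k}$, the same split of edges by the number of recolored endpoints (your unified treatment via $z=[2(x_0+d)]_k$ even covers a negative edge whose two endpoints lie in the same recolored class, a configuration the paper's Case a does not spell out), and the same contrapositive argument for the converse, with the swap/collision dichotomy matching the paper's Cases 1 and 2. Two minor remarks: your $(\Rightarrow)$/$(\Leftarrow)$ labels are interchanged relative to the statement's orientation, and, exactly like the paper's own proof, you implicitly assume $d\geq 2$ (for $d=1$ the target $[x_0+d-1]_k$ coincides with the missing color $x_0$, a degenerate case neither argument addresses).
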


\begin{proof}
	$(\Rightarrow)$	
	If both $[x_0+d]_k$ and $[k-x_0-d]_k$ are not used in $c$, then
	it follows that $c'$ is the same coloring as $c$ since nothing happens in the updating process. So we are done in this case.
	
	Let $x_0\notin P(k,d)$, and suppose to the contrary that $c'$ is not a $(k,d)$-coloring of $(G,\sigma)$. Then there exists an edge $e$ with two end-points $u$ and $v$ such that $|c'(u)-\sigma(e)c'(v)|_k<d.$
	Since $c$ is a $(k,d)$-coloring of $(G,\sigma)$, it follows that $|c(u)-\sigma(e)c(v)|_k\geq d.$
	Hence, the distance between the colors of $u$ and $v$ has been decreased in the updating process.
	The distance can be decreased by at most 2. Hence, we distinguish two cases.
	
	Case a: The distance between the colors of $u$ and $v$ decreases by 2. 
	In this case, both $u$ and $v$ have been recolored, say $c(u)=[x_0+d]_k$ and $c(v)=[k-x_0-d]_k;$ and moreover, 
	$[c(u)-\sigma(e)c(v)]_k\in \{d,d+1\}.$ 
	It follows that $\sigma(e)=1$ and furthermore,
	$[c(u)-\sigma(e)c(v)]_k=d+1$ since for otherwise $c(u)$ and $c(v)$ are in fact the colors $x_0$ and $[k-x_0]_k$ which are missing in $c$.
	By simplification of this equality, we get
	$[2(x_0+d)-k]_k=d+1$ and thus, $x_0\in\{\frac{k-d+1}{2}, \frac{2k-d+1}{2}\}$, contradicting the assumption that $x_0\notin P(k,d).$
	
	Case b: The distance between the colors of $u$ and $v$ decreases by 1. In this case,  exactly one of $u$ and $v$ has been recolored, say $u$; and moreover, $|c(u)-\sigma(e)c(v)|_k=d.$ Without loss of generality, we may assume $c(u)=[x_0+d]_k$. It follows that $c(v)=x_0$, contradicting the fact that $x_0$ is not used in $c$.
	
	Therefore, $c'$ is a $(k,d)$-coloring of $(G,\sigma).$
	If the colors $[x_0+d]_k$ and $[k-x_0-d]_k$ occur in $c'$, then they have been recolored by each other, which can happen in the only case that $k$ is odd and $[x_0+d]_k=x_0+d=\frac{k+1}{2}$. However, this case is impossible since $x_0\notin P(k,d)$.
	Finally, suppose to the contrary that the colors $x_0$ and $[k-x_0]_k$ occur in $c'$. Since they are not used in $c$, they have been reused in the updating process. Thus, $[x_0+d-1]_k=[k-x_0]_k$ and so $x_0\in\{\frac{k-d+1}{2}, \frac{2k-d+1}{2}\}$, a contradiction.
	
	$(\Leftarrow)$
	Suppose to the contrary that $x_0\in P(k,d)$ and at least one of $[x_0+d]_k$ and $[k-x_0-d]_k$ are used in $c$. Without loss of generality, say $[x_0+d]_k$ is used. We distinguish two cases according to the value of $x_0$.
	
	Case 1: assume $x_0\in \{\frac{k-d+1}{2}, \frac{2k-d+1}{2}\}$. Thus, $[x_0+d-1]_k=[k-x_0]_k$, which implies that the color $[k-x_0]_k$ has been reused in the updating process, a contradiction.
	
	Case 2: assume $x_0=\frac{k-2d+1}{2}$. Thus, $[x_0+d]_k=[k-x_0-d]_k+1$, which implies that the colors $[x_0+d]_k$ and $[k-x_0-d]_k$ have been exchanged, a contradiction.
\end{proof}

\begin{lemma}\label{gcd =2}
Let $(G,\sigma)$ be a signed graph on $n$ vertices that has a $(2k,2d)$-coloring and $gcd(k,d)=1$. If $k>2n$, then $(G,\sigma)$ has a $(k,d)$-coloring.
\end{lemma}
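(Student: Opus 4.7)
The plan is to apply the update operation of Lemma~\ref{P(k,d)} iteratively, with parameters $(2k, 2d)$ in place of $(k,d)$, to drive the given $(2k, 2d)$-coloring $c$ onto the subgroup of even elements of $\mathbb{Z}_{2k}$, and then to fold this subgroup isomorphically onto $\mathbb{Z}_k$ via division by two.

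The first step is to locate an odd $x_0 \in \mathbb{Z}_{2k}$ such that both $x_0$ and $2k - x_0$ are missing from $c$. The $k$ odd elements of $\mathbb{Z}_{2k}$ split into $\lfloor k/2 \rfloor$ inverse pairs $\{x, 2k - x\}$, supplemented by the singleton $\{k\}$ when $k$ is odd, yielding $\lceil k/2 \rceil$ pair-slots in total. Since $c$ uses at most $n$ colors, at most $n$ of these pair-slots are hit by a used color, so at least $\lceil k/2 \rceil - n$ pair-slots consist entirely of missing colors. A short check based on the parity of $k$ shows that the hypothesis $k > 2n$ makes this number positive, producing the desired $x_0$.

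Next, I would update $c$ successively at the colors $x_0, [x_0+2d]_{2k}, [x_0+4d]_{2k}, \ldots, [x_0+2(k-2)d]_{2k}$, for a total of $k - 1$ updates. Since every element of $P(2k, 2d)$ is a half-integer, the side condition $x_0 \notin P(2k, 2d)$ of Lemma~\ref{P(k,d)} holds vacuously at each step. Inductively, after the $(i-1)$-st update both $[x_0+2(i-1)d]_{2k}$ and $[-x_0-2(i-1)d]_{2k}$ are missing from the current coloring, so the $i$-th update is valid and outputs a $(2k, 2d)$-coloring from which two further odd colors are absent. Because $k$ and $d$ are coprime, Lemma~\ref{algebra} guarantees that $\{[x_0 + 2id]_{2k} : 0 \le i \le k-1\}$ traverses the entire coset $x_0 + \langle 2d \rangle$ in $\mathbb{Z}_{2k}$; since $x_0$ is odd and $\langle 2d \rangle$ is the subgroup $\{0, 2, \ldots, 2k-2\}$ of even elements, this coset is exactly the set of odd elements of $\mathbb{Z}_{2k}$. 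Hence the resulting coloring $\tilde c$ uses only even elements.

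Finally, define $c'(v) = \tilde c(v)/2 \in \mathbb{Z}_k$. The even elements of $\mathbb{Z}_{2k}$ form a subgroup isomorphic to $\mathbb{Z}_k$ under $a \mapsto a/2$, and $|a|_{2k} = 2\,|a/2|_k$ for every even integer $a$, so the $(2k, 2d)$-coloring inequalities for $\tilde c$ translate directly into the $(k, d)$-coloring inequalities for $c'$ on both positive and negative edges. The main obstacle is the parity requirement in the first step: had $\tilde c$ landed in the odd coset instead of the even subgroup, the fold $a \mapsto a/2$ would no longer be a homomorphism, and an unwanted constant shift of $\tfrac{1}{2}$ would appear in the negative-edge inequality, destroying the reduction. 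It is precisely the pigeonhole count above, which uses the full strength of $k > 2n$, that lets us choose $x_0$ of the correct parity.
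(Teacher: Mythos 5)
Your proposal is correct and follows essentially the same route as the paper: find an odd $x_0$ whose inverse pair is unused (the paper leaves the pigeonhole implicit, you spell it out), update along the orbit $x_0+\langle 2d\rangle$ using Lemma~\ref{P(k,d)} (whose hypothesis holds vacuously since $P(2k,2d)$ consists of half-integers) and Lemma~\ref{algebra} to vacate all odd colors, then halve. The only cosmetic difference is that you verify the final $(k,d)$-inequalities via the identity $|a|_{2k}=2|a/2|_k$ rather than the paper's interval-counting argument; both are equivalent.
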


\begin{proof}
Let $c$ be a $(2k,2d)$-coloring of $(G,\sigma)$.
Since $k>2n$, we may assume that there is odd $x_0$, such that $x_0$ and $k-x_0$ are not used in $c$. 
Update $c$ from $x_0$ by $k$ steps to obtain a function $c'$. Denote by $A$ the set of odd elements of $\mathbb{Z}_{2k}$.
Since both $2k$ and $2d$ are even it follows with Lemma \ref{algebra} that the colors of $A \cap \{c(v) : v \in V(G)\}$ have been 
recolored by colors of $\mathbb{Z}_{2k}\setminus A$ in the updating process. Hence, $A \cap \{c'(v) : v \in V(G)\} = \emptyset$,
and by Lemma \ref{P(k,d)}, $c'$ is a $(2k,2d)$-coloring of $(G,\sigma)$. Thus,
$\phi: V(G)\rightarrow \mathbb{Z}_{k}$ with $\phi(v) = \frac{1}{2}c'(v)$ is a coloring of $(G,\sigma)$. 
Let $I_j = \{j,j+1,\ldots, j+2d-1\}$ which is an interval of $\mathbb{Z}_{2k}$. Each interval $I_j$ contains exactly $d$ elements of $A$. 
Moreover, any pair of mutually inverse elements of $\mathbb{Z}_{2k}$ has been recolored 
by a pair of mutually inverse elements of  $\mathbb{Z}_{k}$. Hence, $\phi$ is a $(k,d)$-coloring of $(G,\sigma)$, as required.
\end{proof}

\begin{lemma}\label{gcd=1}
If $(G,\sigma)$ is a signed graph on $n$ vertices that has a $(k,d)$-coloring with $gcd(k,d)=1$ and $k>4n$, then $(G,\sigma)$ has a $(k',d')$-coloring with $k'<k$ and $\frac{k'}{d'}< \frac{k}{d}$.
\end{lemma}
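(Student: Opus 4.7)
The plan is to adapt the strategy of the proof of Lemma \ref{gcd =2} to the case $\gcd(k,d)=1$, using the updating procedure (Definition \ref{update} and Lemma \ref{P(k,d)}) iteratively to transform $c$ into a $(k,d)$-coloring whose image is a structured proper subset of $\mathbb{Z}_k$, and then to collapse this to a smaller modulus.

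First, because $c$ uses at most $n$ of the $k>4n$ colors, at least $3n+1$ colors are unused in $c$. A counting argument on the inverse pairing $x\leftrightarrow k-x$ shows that at least $k-2n>2n$ values $x_0\in\mathbb{Z}_k$ have both $x_0$ and $k-x_0$ unused. Since $|\mathbb{Z}_k\cap P(k,d)|\leq 2$, I can further require $x_0\notin P(k,d)$.

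Second, I apply Lemma \ref{P(k,d)} iteratively along the arithmetic progression $x_0,[x_0+d]_k,[x_0+2d]_k,\ldots$ By Lemma \ref{algebra} and $\gcd(k,d)=1$, this progression permutes $\mathbb{Z}_k$, so updates can in principle be sustained for many steps; the at most three obstructions from $P(k,d)$ are either skipped or absorbed by the second alternative of Lemma \ref{P(k,d)} (namely, that both the relevant shift-by-$d$ targets have already become unused along the way). After a suitable number $r$ of valid updates I obtain a $(k,d)$-coloring $c'$ of $(G,\sigma)$ whose unused set contains the full chain $\{[x_0+id]_k:0\leq i\leq r-1\}$ together with its inverse, giving a symmetric unused set $U=\{y:y\in U\}=\{k-y:y\in U\}$.

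Third, I construct an inverse-preserving bijection $\phi:\mathbb{Z}_k\setminus U\to\mathbb{Z}_{k'}$ following the collapse at the end of the proof of Lemma \ref{gcd =2}. The composition $\phi\circ c':V(G)\to\mathbb{Z}_{k'}$ is then verified to be a $(k',d')$-coloring of $(G,\sigma)$ for an appropriate pair $(k',d')$ with $k'<k$ and $\frac{k'}{d'}<\frac{k}{d}$.

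The main obstacle is the third step: choosing $r$, and with it $(k',d')$, so that both inequalities hold and that the distance constraints (for positive edges) and the sum constraints (for negative edges) survive the collapse. Note that a naive reduction to $(k-2,d-1)$ is useless, because $k>2d$ forces $\frac{k-2}{d-1}>\frac{k}{d}$; the argument must instead exploit the slack afforded by $k>4n$ either to keep $d'=d$ while lowering $k$ (removing unused colors chosen so no tight positive- or negative-edge constraint crosses the removed region) or to aim for a steeper reduction $(k-a,d-b)$ with $\frac{a}{b}>\frac{k}{d}$ that matches the arithmetic structure of the unused set $U$ produced by the iterated updating.
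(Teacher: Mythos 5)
Your outline matches the paper's strategy at the top level (iterate the updating procedure to vacate a structured set of colors, then collapse inverse-preservingly to a smaller modulus), but it stops exactly where the real work begins, and you say so yourself: the choice of $r$ and of $(k',d')$ with $\frac{k'}{d'}<\frac{k}{d}$ is left as "the main obstacle" with two candidate directions, neither carried out. That is the gap. The first direction (keep $d'=d$ and only shrink $k$) cannot work in general: deleting any unused color lying on a shortest arc between two colors at distance exactly $d$ destroys the coloring, and with up to $\binom{n}{2}$ tight edges these arcs can cover all of $\mathbb{Z}_k$. The second direction is the right one, but making it work requires a specific mechanism you do not supply. In the paper, the bijection $f$ with $x\equiv f(x)d\pmod k$ linearizes the update chain, and the chain is steered in two phases: first the free pair is pushed until it lands on $p$ and $k-p$, where $P(k,d)\cap\mathbb{Z}_k=\{p,q\}$; then updating restarts from $k-p$ and runs for exactly $r$ steps, where $r$ is minimal with $[k-p+rd]_k\in\{p,q\}$. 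This termination condition is not cosmetic: it yields a congruence such as $(r+1)d\equiv 1\pmod k$ or $(2r+2)d\equiv 2\pmod k$, which is precisely what makes the count $d^*$ of removed colors in each length-$d$ interval a well-defined integer equal to $\frac{(r+1)d-1}{k}$ (or the analogous expression), so that $kd'=k'd+1$ (or $+2$) and hence $\frac{k'}{d'}=\frac{kk'}{k'd+1}<\frac{k}{d}$. The deficit of $1$ or $2$ is the entire source of the strict inequality, and it exists only because the vacated arithmetic progression begins and ends at the special points of $P(k,d)$.

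A secondary problem is your treatment of the obstructions in $P(k,d)$: you propose that they are "either skipped or absorbed," but Lemma \ref{P(k,d)} gives no way to skip an element of the progression (updating at $[x_0+id]_k$ presupposes that this color and its inverse were freed by the previous step), and the absorption alternative (both shift targets already unused) is not something you can arrange generically. The paper avoids the issue entirely by choosing $x_0$ so that, in the $f$-coordinates, the chain runs from $f(x_0)$ to $f(p)$ without meeting $f(q)$, and by terminating each phase \emph{at} an element of $P(k,d)$ rather than updating \emph{through} it. Without this routing, and without the resulting congruence on $r$, the final inequality $\frac{k'}{d'}<\frac{k}{d}$ does not follow from anything in your proposal.
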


\begin{proof}
Since $gcd(k,d)=1$, we may assume that $P(k,d)\cap \mathbb{Z}_k=\{p,q\}$ and $p<q$.

Let $f\colon\ \mathbb{Z}_k\mapsto \mathbb{Z}_k$ such that $x\equiv f(x)d~(\text{mod~} k).$ Lemma \ref{algebra} implies that $f$ is a bijection. Further,
$x$ and $y$ are mutually inverse elements of $\mathbb{Z}_k$ if and only if $f(x)$ and $f(y)$ are mutually inverse ones, and $|f(p)-f(q)|_k=\lfloor\frac{k}{2}\rfloor$.

Let $c$ be a $(k,d)$-coloring of $(G,\sigma)$.
Since $k> 4n$ we may assume that  $x_0 \in \mathbb{Z}_{k}$ such that $x_0$ and $k-x_0$ are not used in $c$, and that 
$f(q), f(x_0)$ and $f(p)$ are in clockwise order in $\mathbb{Z}_k$.

Hence, $c$ can be updated from $x_0$ by $[f(p)-f(x_0)]_k$ steps to obtain a $(k,d)$-coloring $c'$ of $(G,\sigma)$ in which colors $p$ and $k-p$ are not used. 
Let $r=\min \{[f(p)-f(k-p)]_k,[f(q)-f(k-p)]_k\}$, i.e., $r$ is the minimum positive integer such that either $k-p+rd\equiv p ~(\text{mod~} k)$ or $k-p+rd\equiv q ~(\text{mod~} k)$. Updating $c'$ from $k-p$ by $r$ steps, we obtain a function $c''$, which is a $(k,d)$-coloring $c''$ of $(G,\sigma)$ by Lemma \ref{P(k,d)}. 

We will show that no color is reused in this updating process such that we can define a $(k',d')$-coloring of $(G,\sigma)$ with $\frac{k'}{d'} < \frac{k}{d}$ and $k' < k$.  

Let $A=\{[k-p+id]_k\colon\ 0\leq i \leq r\}$ and $B=\{k-a\colon\ a\in A\}$. 
By simplifying the congruence expressions, we reformulate the minimality of $r$ as: $r$ is the minimum positive integer such that\\
(1) either $(r+1)d\equiv 1 ~(\text{mod~} k)$ or $(2r+2)d\equiv 2 ~(\text{mod~} k)$, if $k$ is even;\\
(2) either $(r+2)d\equiv 1 ~(\text{mod~} k)$ or $(2r+3)d\equiv 2 ~(\text{mod~} k)$, if $k$ is odd.

\begin{claim} \label{no_reuse} No element of $A\cup B$ is used in coloring $c''$. 
\end{claim}

Suppose to the contrary that $A\cup B$ has a color $\alpha$ with $\alpha \in\{[k-p+r_1d]_k,k-[k-p+r_1d]_k\}$ appearing in $c''$. Concerning that the color $\alpha$ is missing in the resulting coloring after exactly $r_1$ steps in the updating process, its appearance in $c''$ yields that it has been reused in some $r_2$ step with $r>r_2>r_1$.
It follows that either $k-p+r_2d\equiv k-p+r_1d+1 ~(\text{mod~} k)$ or $k-p+r_2d\equiv -(k-p+r_1d)+1 ~(\text{mod~} k)$.

In the former case, the congruence expression can be simplified as $(r_2-r_1)d\equiv 1 ~(\text{mod~} k)$. Note that $0<r_2-r_1<r+1$. A contradiction is obtained by the minimality of $r$.

In the latter case, the congruence expression can be simplified as $(r_1+r_2+1)d\equiv 2 ~(\text{mod~} k)$ if $k$ is even and
 $(r_1+r_2+2)d\equiv 2 ~(\text{mod~} k)$ if $k$ is odd. But then $r_1+r_2<2r+1$ which is a contradiction to the minimality of $r$.
This completes the proof of the claim.

\textbf{Case 1:} $k$ is even. In this case, $p=\frac{1}{2}(k-d+1)$ and $q=\frac{1}{2}(2k-d+1)$.

Case 1.a: $k-p+rd\equiv p ~(\text{mod~} k)$.

The colors $[k-p+id]_k$ and $[k-p+(r-i)d]_k$ are mutually inverse, for $0\leq i\leq r$. Thus, the set $A$ consists of $\lceil \frac{r+1}{2} \rceil$ pairs of mutually inverse elements of $\mathbb{Z}_k$ and $\{0,\frac{k}{2}\}\nsubseteq A=B$. Since the colors of $A\cup B$ are not used in $c''$ by Claim \ref{no_reuse}, we rename the other colors: if $0\notin A$, then change color $x$ to $x-|\{y\colon\ y\in A~\text{and}~y<x\}|$; otherwise, change color $x$ to $x-|\{y\colon\ y\in A~\text{and}~y<x\}|-\lfloor \frac{k-|A|}{2} \rfloor$. Define $k'=k-r-1$. We thereby obtain a mapping $\phi': V(G) \rightarrow \mathbb{Z}_{k'}$. Denote by $I_j$ the set $\{j,j+1,\ldots, j+d-1\}$ which is an interval of $\mathbb{Z}_k$. Each interval $I_j$ contains at most $\frac{rd+d-1}{k}$ elements of $A$. Define $d'=d-\frac{rd+d-1}{k}$. Moreover, any pair of mutually inverse colors of $\mathbb{Z}_k$ has been recolored to mutually inverse colors of $\mathbb{Z}_k$ and then has been renamed to be mutually inverse colors of $\mathbb{Z}_{k'}$. Hence, $\phi'$ is a $(k',d')$-coloring of $(G,\sigma)$, and $\frac{k'}{d'}=\frac{k(k-r-1)}{d(k-r-1)+1}<\frac{k}{d}$.

Case 1.b: $k-p+rd\equiv q~(\text{mod~} k)$. 

We have that either $0 < f(q), f(k-p) < \frac{k}{2}$ or  $\frac{k}{2} < f(q), f(k-p) < k$.
Since $|f(p)-f(q)|_k=\frac{k}{2}$, it follows that neither $\{f(a) : a \in A\}$ nor $A$ contains any pair of mutually inverse colors.  
Thus, $A\cup B$ consists of $r+1$ pairs of mutually inverse colors and $0,\frac{k}{2} \not\in A\cup B$. Define $k'=k-2(r+1)$.
Since the colors in the set $A\cup B$ are not used in $c''$ by Claim \ref{no_reuse}, we may rename the other colors, changing color $x$ to $x-|\{y\colon\ y\in A\cup B~\text{and}~y<x\}|$, thereby obtain a mapping $\phi': V(G) \rightarrow \mathbb{Z}_{k'}$. Denote by $I_j$ the set $\{j,j+1,\ldots, j+d-1\}$ which is an interval of $\mathbb{Z}_k$. Each interval $I_j$ contains at most $\frac{2rd+2d-2}{k}$ elements of $A$. Define $d'=d-\frac{2rd+2d-2}{k}=\frac{(k-2r-2)d+2}{k}$. By repeating the argument as in Case 1.a, we get a $(k',d')$-coloring of $(G,\sigma)$.Furthermore, $\frac{k'}{d'}=\frac{k(k-2r-2)}{d(k-2r-2)+2}<\frac{k}{d}$.

\textbf{Case 2:} $k$ is odd.
In this case, $p=\frac{1}{2}(k-2d+1)$, and $q=\frac{1}{2}(k-d+1)$ when $d$ is even and $q=\frac{1}{2}(2k-d+1)$ when $d$ is odd.

Case 2.a: $k-p+rd\equiv p ~(\text{mod~} k)$.

The colors $[p+id]_k$ and $[p+(r-i)d]_k$ are mutually inverse for $0\leq i\leq r$. Thus, $A=B$ and $A$ consists of $\lceil \frac{r+1}{2} \rceil$ pairs of mutually inverse colors of $\mathbb{Z}_k$. Since the colors of $A\cup B$ are not used in $c''$ by Claim \ref{no_reuse}, we may rename the other colors: if $0\notin A$,
then change $x$ to $x-|\{y\colon\ y\in A~\text{and}~y<x\}|$ for each $x\leq\lfloor \frac{k}{2}\rfloor$ and to $x-|\{y\colon\ y\in A~\text{and}~y<x\}|-1$ for each $x>\lfloor \frac{k}{2}\rfloor$; otherwise, change $x$ to $x-|\{y\colon\ y\in A~\text{and}~y<x\}|-\frac{k-|A|}{2} +1$ for each $x\leq\lfloor \frac{k}{2}\rfloor$ and to $x-|\{y\colon\ y\in A~\text{and}~y<x\}|- \frac{k-|A|}{2}$ for each $x>\lfloor \frac{k}{2}\rfloor$.
The mutually inverse colors $\frac{k-1}{2}$ and $\frac{k+1}{2}$ of $\mathbb{Z}_k$ are not in $A$ and they have been renamed into the same color. Define $k'=k-r-2$. We thereby obtain a mapping $\phi': V\rightarrow \mathbb{Z}_{k'}$. Denote by $I_j$ the set $\{j,j+1,\ldots, j+d-1\}$ which is an interval of $\mathbb{Z}_k$. Define $d^*=\frac{1}{k}(rd+2d-1)$. For each interval $I_j$, if both colors $\frac{k-1}{2}$ and $\frac{k+1}{2}$ belong to $I_j$, then $I_j$ contains at most $d^*-1$ elements of $A$; otherwise, $I_j$ contains at most $d^*$ elements of $A$. Define $d'=d-d^*$.
Moreover, any pair of mutually inverse colors of $\mathbb{Z}_k$ has been recolored to be mutually inverse colors of $\mathbb{Z}_k$ and then has been renamed to be mutually inverse colors of $\mathbb{Z}_{k'}$. Hence, $\phi'$ is a $(k',d')$-coloring of $(G,\sigma)$, and $\frac{k'}{d'}=\frac{k(k-r-2)}{d(k-r-2)+1}<\frac{k}{d}$.

Case 2.b: $k-p+rd\equiv q~(\text{mod~} k)$.

By similar argument as in Case 1.b, we may assume that $A$ contains no mutually inverse colors of $\mathbb{Z}_k$.
Thus, $A\cup B$ consists of $r+1$ pairs of mutually inverse colors and $0\notin A\cup B$.
Since the colors of $A\cup B$ are not used in $c''$ by Claim \ref{no_reuse}, we may rename the other colors:
change $x$ to $x-|\{y\colon\ y\in A~\text{and}~y<x\}|$ for each $x\leq\lfloor \frac{k}{2}\rfloor$ and to $x-|\{y\colon\ y\in A~\text{and}~y<x\}|-1$ for each $x>\lfloor \frac{k}{2}\rfloor$.
The mutually inverse colors $\frac{k-1}{2}$ and $\frac{k+1}{2}$ of $\mathbb{Z}_k$ are not contained in the set $A$ and have been renamed into the same color. Define $k'=k-2r-3$. We thereby obtain a mapping $\phi': V\rightarrow \mathbb{Z}_{k'}$. Denote by $I_j$ the set $\{j,j+1,\ldots, j+d-1\}$ which is an interval of $\mathbb{Z}_k$. Define $d^*=\frac{1}{k}(2rd+3d-2)$. Clearly, $d^*$ is a positive integer because of the assumption of Case 2.b.
For each interval $I_j$, if both colors $\frac{k-1}{2}$ and $\frac{k+1}{2}$ belong to $I_j$, then $I_j$ contains at most $d^*-1$ elements of $A$; otherwise, $I_j$ contains at most $d^*$ elements of $A$. Define $d'=d-d^*$. Any pair of mutually inverse colors of $\mathbb{Z}_k$ has been recolored to be mutually inverse colors of $\mathbb{Z}_k$ and then has been renamed to be mutually inverse colors of $\mathbb{Z}_{k'}$. Hence, $\phi'$ is a $(k',d')$-coloring of $(G,\sigma)$, and $\frac{k'}{d'}=\frac{k(k-2r-3)}{d(k-2r-3)+2}<\frac{k}{d}$.
\end{proof}

\begin{theorem}\label{cor_min}
If $(G,\sigma)$ is a signed graph on $n$ vertices, then
$$\chi_c((G,\sigma)):= \min \{\frac{k}{d}\colon\ (G,\sigma) \text{ has a $(k,d)$-coloring and $k \leq 4n$}\}.$$
\end{theorem}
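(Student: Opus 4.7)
The plan is to prove that every $(k,d)$-coloring of $(G,\sigma)$ can be replaced by a $(k^*,d^*)$-coloring whose first coordinate satisfies $k^* \leq 4n$ and whose ratio satisfies $\frac{k^*}{d^*} \leq \frac{k}{d}$. Since the set $S = \{(k,d) : k \leq 4n \text{ and } (G,\sigma) \text{ has a } (k,d)\text{-coloring}\}$ is finite, and it is nonempty (the trivial coloring using $\chi((G,\sigma)) \leq n$ colors gives an element), the minimum $\mu := \min\{\tfrac{k}{d} : (k,d) \in S\}$ exists. Once every $(k,d)$-coloring has been dominated in ratio by some element of $S$, we obtain $\mu \leq \chi_c((G,\sigma))$, and the reverse inequality is immediate from the definition of $\chi_c$. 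This establishes the theorem.

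I would carry out the domination step by strong induction on $k$. If $k \leq 4n$, take $(k^*,d^*) = (k,d)$. Otherwise set $g = \gcd(k,d)$ and write $k = gk_0$, $d = gd_0$ with $\gcd(k_0,d_0) = 1$. Three cases arise according to $g$. If $g \geq 3$, Lemma \ref{gcd at least 3} yields a $((g-2)k_0,(g-2)d_0)$-coloring with the same ratio $\frac{k_0}{d_0} = \frac{k}{d}$ and strictly smaller first coordinate, so the induction hypothesis applies. If $g = 2$, then $k_0 = k/2 > 2n$, hence Lemma \ref{gcd =2} produces a $(k_0,d_0)$-coloring; again the ratio is preserved and the first coordinate strictly decreased. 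If $g = 1$, the assumption $k > 4n$ matches the hypothesis of Lemma \ref{gcd=1}, giving a $(k',d')$-coloring with $k' < k$ and $\frac{k'}{d'} < \frac{k}{d}$, after which the induction closes.

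In each case the induction is well-founded because $k$ strictly decreases, and the ratio never increases, so the output $(k^*,d^*) \in S$ obtained at the end of the recursion satisfies $\frac{k^*}{d^*} \leq \frac{k}{d}$, as required.

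Essentially all of the work has already been done in Lemmas \ref{gcd at least 3}, \ref{gcd =2}, and \ref{gcd=1}; the proof of the theorem is merely a bookkeeping argument that dispatches to the correct lemma depending on $\gcd(k,d)$. The only potential obstacle is verifying that the hypotheses of each lemma are satisfied by the current values when $k > 4n$: the bound $k > 4n$ forces $k_0 > 2n$ in the $g = 2$ case (so Lemma \ref{gcd =2} applies) and forces $k > 4n$ directly in the $g = 1$ case (so Lemma \ref{gcd=1} applies), while the $g \geq 3$ case needs no size hypothesis. Thus the induction goes through without further complication.
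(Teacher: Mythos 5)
Your proposal is correct and follows essentially the same route as the paper: the paper's proof is exactly the combination of Lemmas \ref{gcd at least 3}, \ref{gcd =2} and \ref{gcd=1} to reduce any $(k,d)$-coloring to one with first coordinate at most $4n$ and no larger ratio, followed by the observation that the resulting set of ratios is finite so the infimum is attained. You merely make explicit the descent on $k$ via the case distinction on $\gcd(k,d)$ (which the paper leaves implicit), and verify the lemmas' hypotheses, which is a faithful elaboration rather than a different argument.
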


\begin{proof}
By Lemmas \ref{gcd at least 3}, \ref{gcd =2} and \ref{gcd=1}, if $(G,\sigma)$ has a $(k,d)$-coloring then it has a $(k',d')$-coloring with $k'\leq 4n$ and $\frac{k'}{d'}\leq  \frac{k}{d}.$ Therefore,
$$\chi_c((G,\sigma)):= \inf \{\frac{k}{d}\colon\ (G,\sigma) \text{ has a $(k,d)$-coloring and } k\leq 4n\}.$$
Since the set $\{\frac{k}{d}\colon\ (G,\sigma) \text{ has a $(k,d)$-coloring and } k\leq 4n\}$ is finite, the infimum can be replaced by a minimum.
\end{proof}

\subsection*{Relation between $\chi_c$ and $\chi$}

\begin{lemma}\label{lem_tk td}
If a signed graph $(G,\sigma)$ has a $(k,d)$-coloring, then for any positive integer $t$, $(G,\sigma)$ has a $(tk,td)$-coloring.
\end{lemma}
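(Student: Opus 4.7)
The plan is to give the most direct construction possible: scale the given coloring by $t$. Concretely, let $c:V(G)\to\mathbb{Z}_k$ be a $(k,d)$-coloring and lift each value $c(v)$ to its canonical representative in $\{0,1,\ldots,k-1\}$. Define $c':V(G)\to\mathbb{Z}_{tk}$ by $c'(v)=t\cdot c(v)$, viewed as an element of $\{0,1,\ldots,tk-1\}\subset\mathbb{Z}_{tk}$. I then need to verify that for every edge $e=uv$, one has $|c'(u)-\sigma(e)c'(v)|_{tk}\geq td$, which reduces to a single scaling identity for the $|\cdot|_k$ norm.

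The key step is the identity $|tx|_{tk}=t\,|x|_k$ for every $x\in\mathbb{Z}$. First I would write $[x]_k=r$ with $r\in[0,k)$; then $tr\in[0,tk)$, so $[tx]_{tk}=t\,[x]_k$. The same calculation gives $[-tx]_{tk}=t\,[-x]_k$ (handling $r=0$ as a trivial separate case). Taking the minimum yields the claimed identity.

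With this identity in hand the proof finishes immediately. For a positive edge $e=uv$, $|c'(u)-c'(v)|_{tk}=|t(c(u)-c(v))|_{tk}=t\,|c(u)-c(v)|_k\geq td$, since $c$ is a $(k,d)$-coloring. For a negative edge, $c(u)-\sigma(e)c(v)=c(u)+c(v)$, and the same scaling gives $|c'(u)+c'(v)|_{tk}=t\,|c(u)+c(v)|_k\geq td$. Hence $c'$ satisfies the required inequality on every edge and is therefore a $(tk,td)$-coloring of $(G,\sigma)$.

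I do not expect any real obstacle here: the only point that needs a bit of care is justifying the scaling identity $|tx|_{tk}=t|x|_k$ via the canonical representative, and in particular checking the boundary case when $[x]_k=0$ so that $[-x]_k=0$ as well (otherwise $[-x]_k=k-[x]_k$). Once that identity is verified the rest is a one-line substitution that uses the definition of a $(k,d)$-coloring verbatim.
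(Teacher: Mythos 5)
Your proposal is correct and is essentially the paper's own proof: the paper also defines $c'(x)=t\,c(x)$ and leaves the verification implicit, while you additionally spell out the scaling identity $|tx|_{tk}=t|x|_k$ that justifies it.
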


\begin{proof}
Let $c$ be a $(k,d)$-coloring of $(G,\sigma)$.
Define a $(tk,td)$-coloring $c'$ of $(G,\sigma)$ by
$$c'(x)=tc(x), \text{ for all } x\in V(G).$$
\end{proof}

\begin{lemma}\label{lem_bigger k}
If a signed graph $(G,\sigma)$ has a $(k,d)$-coloring and $k'>k$, where $k'$ is a positive integer, then $(G,\sigma)$ has a $(k',d)$-coloring.
\end{lemma}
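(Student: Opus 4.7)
The plan is to normalize the given coloring via switching so that every color lies in the ``lower half'' of $\mathbb{Z}_k$, and then simply reinterpret the colors as elements of $\mathbb{Z}_{k'}$. The wider cyclic group provides enough room to absorb the shorter distances we need, provided we have ruled out the ``wrap-around'' problem for negative edges.

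By Proposition \ref{switching} together with the observation recorded after its proof, there is a signature $\sigma^*$ equivalent to $\sigma$ and a $(k,d)$-coloring $c\colon V(G)\to\{0,1,\ldots,\lfloor k/2\rfloor\}$ of $(G,\sigma^*)$. Define $c'\colon V(G)\to\mathbb{Z}_{k'}$ by $c'(v)=c(v)$ for every $v\in V(G)$; this is well defined because $c(v)\le\lfloor k/2\rfloor<k<k'$.

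The verification splits according to the sign of the edge. For a positive edge $e=vw$, both colors lie in $[0,\lfloor k/2\rfloor]$, so $|c(v)-c(w)|\le\lfloor k/2\rfloor<k'/2$ and therefore $|c'(v)-c'(w)|_{k'}=|c(v)-c(w)|=|c(v)-c(w)|_k\ge d$. For a negative edge $e=vw$, the sum $c(v)+c(w)$ lies in $[0,k]$; the hypothesis $|c(v)+c(w)|_k\ge d$ rules out the values $0$ and $k$ and forces $c(v)+c(w)\in[d,k-d]$. Since $k'>k$, this interval is contained in $[d,k'-d]$, so $[c'(v)+c'(w)]_{k'}=c(v)+c(w)\in[d,k'-d]$ and hence $|c'(v)+c'(w)|_{k'}\ge d$. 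This shows $c'$ is a $(k',d)$-coloring of $(G,\sigma^*)$, and Proposition \ref{switching} then transfers it back to $(G,\sigma)$.

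The only subtlety is the one that made the reduction to colors in $[0,\lfloor k/2\rfloor]$ necessary in the first place: for a negative edge with raw colors $c(v),c(w)\in\{0,\ldots,k-1\}$, the sum might satisfy $c(v)+c(w)\in[k+d,2k-d]$, which wraps around modulo $k$ but not modulo $k'$, so the distance to $0$ on the circle of length $k'$ could collapse below $d$. Confining colors to $[0,\lfloor k/2\rfloor]$ keeps $c(v)+c(w)\le k$, and this is the key (and only nontrivial) point of the argument.
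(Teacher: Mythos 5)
Your proof is correct. The verification goes through: after switching so that all colors lie in $\{0,1,\ldots,\lfloor k/2\rfloor\}$, positive-edge differences are at most $\lfloor k/2\rfloor<k'/2$ and hence their circular $k'$-distance equals their $k$-distance, and negative-edge sums are forced into $[d,k-d]\subseteq[d,k'-d]$, exactly as you argue; Proposition \ref{switching} then carries the coloring back to $(G,\sigma)$. The paper takes a slightly different, switching-free route: it keeps the original coloring and signature and defines $c'(x)=c(x)$ when $c(x)\leq\lfloor k/2\rfloor$ and $c'(x)=c(x)+k'-k$ otherwise, i.e.\ it inserts the extra $k'-k$ colors into the circle at the point antipodal to $0$, which preserves inverse pairs and can only increase the relevant circular distances. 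The underlying idea is the same in both arguments --- the slack must be added opposite $0$ so that neither differences (positive edges) nor sums (negative edges) collapse --- but the trade-off differs: the paper's piecewise shift needs no appeal to switching and works for an arbitrary $(k,d)$-coloring of the given signature, at the cost of a case check over which endpoints lie in the upper half, whereas your normalization via the remark after Proposition \ref{switching} confines all sums to $[0,k]$ and makes the verification essentially one line per edge sign, at the cost of passing to an equivalent signature and back.
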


\begin{proof}
Let $c$ be a $(k,d)$-coloring of $(G,\sigma)$.
Define the mapping $c': V(G)\rightarrow \mathbb{Z}_{k'}$ by for all $x\in V(G)$,
\begin{eqnarray*}
c'(x)=
\begin{cases}
c(x)  & \text{if } c(x)\leq \lfloor \frac{k}{2} \rfloor,\\
c(x)+k'-k & \text{otherwise}.
\end{cases}
\end{eqnarray*}
It is easy to check that $c'$ is a $(k',d)$-coloring of $(G,\sigma)$.
\end{proof}

\begin{theorem}\label{pro_bigger coloring}
If a signed graph $(G,\sigma)$ has a $(k,d)$-coloring, and $k'$ and $d'$ are two positive integers such that $\frac{k}{d}<\frac{k'}{d'}$, then $(G,\sigma)$ has a $(k',d')$-coloring.
\end{theorem}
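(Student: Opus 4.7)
The plan is to combine three earlier lemmas: first inflate the $(k,d)$-coloring via Lemma~\ref{lem_tk td}, then enlarge the alphabet via Lemma~\ref{lem_bigger k}, and finally deflate back to $(k',d')$ via iterated applications of Lemma~\ref{gcd at least 3}. A preliminary step handles the greatest common divisor of $k'$ and $d'$.

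If $g:=\gcd(k',d')>1$, I would first reduce to the coprime case: writing $k'_0=k'/g$ and $d'_0=d'/g$, I have $\gcd(k'_0,d'_0)=1$ and $k'_0/d'_0=k'/d'>k/d$, so proving the statement for the target $(k'_0,d'_0)$ and then applying Lemma~\ref{lem_tk td} with $t=g$ delivers a $(k',d')$-coloring. So I assume $\gcd(k',d')=1$ henceforth.

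Set $\Delta=dk'-d'k$, which is $\geq 1$ by hypothesis. I would pick a positive integer $t$ so large that the half-open interval $I_t=(kt/k',\,dt/d']$ has length $t\Delta/(k'd')\geq 2$ and lower endpoint $kt/k'\geq 2$; both conditions hold once $t\geq\max\{2k'd'/\Delta,\,2k'/k\}$. Then $I_t$ contains two consecutive integers, so in particular an odd integer $s\geq 3$, which I fix. The construction then proceeds: (i)~Lemma~\ref{lem_tk td} produces a $(kt,dt)$-coloring of $(G,\sigma)$; (ii)~since $sk'>kt$, Lemma~\ref{lem_bigger k} upgrades it to an $(sk',dt)$-coloring; (iii)~since $sd'\leq dt$, relaxing the distance constraint gives an $(sk',sd')$-coloring; (iv)~because $\gcd(k',d')=1$ and $s\geq 3$ is odd, iterating Lemma~\ref{gcd at least 3} deflates the scaling factor by $2$ at each step, $(sk',sd')\to((s-2)k',(s-2)d')\to\cdots\to(3k',3d')\to(k',d')$, terminating at the required coloring.

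The main obstacle will be step~(iv) paired with the choice of $s$: I need $s$ simultaneously odd, at least $3$, and lying inside $I_t$. Oddness is essential so that the iteration in (iv) lands exactly on $(k',d')$ rather than on $(2k',2d')$ (which would force me to invoke Lemma~\ref{gcd =2}, whose hypotheses on $k'$ and $n$ need not hold here). Fortunately, the length of $I_t$ grows linearly in $t$ with positive slope $\Delta/(k'd')$, so taking $t$ large enough guarantees the existence of such an $s$; this is precisely where the strict inequality $k/d<k'/d'$ enters.
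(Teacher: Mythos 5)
Your argument is correct, and at the point where the real work happens it takes a genuinely different route from the paper's. Both proofs share the same skeleton --- inflate with Lemma~\ref{lem_tk td}, enlarge the modulus with Lemma~\ref{lem_bigger k}, deflate with Lemma~\ref{gcd at least 3} --- but the paper inflates by the fixed factor $d'$, reaching a $(k'd,dd')$-coloring, and can then deflate only when $d$ is odd; when $d$ is even it falls back on a separate and fairly heavy direct construction, rounding a $(k'd-1,dd')$-coloring via $c'(x)=\lfloor c(x)/d+\tfrac12\rfloor$ and verifying the edge constraints by hand. You sidestep that case distinction entirely: by taking the inflation factor $t$ so large that the interval $(kt/k',\,dt/d']$ --- whose length $t(dk'-d'k)/(k'd')$ grows linearly in $t$ precisely because the hypothesis $\frac{k}{d}<\frac{k'}{d'}$ is strict --- contains an odd integer $s\geq 3$, you guarantee an odd deflation factor, at the cost of one extra (and trivially valid, since the defining condition $d\leq|c(v)-\sigma(e)c(w)|_k$ is a lower bound) step of weakening the distance parameter from $dt$ to $sd'$. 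Two further points in your favour: the sanity check $k'\geq 2d'$ needed for the target coloring to be well defined does follow from $k'/d'>k/d\geq 2$ and is worth recording, and your explicit reduction to $\gcd(k',d')=1$ supplies a hypothesis of Lemma~\ref{gcd at least 3} that the paper's own proof relies on but never arranges. The net effect is a shorter and more uniform argument than the published one.
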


\begin{proof}
By Lemma \ref{lem_tk td}, $(G,\sigma)$ has a $(kd',dd')$-coloring.
Since $\frac{k}{d}<\frac{k'}{d'}$, Lemma \ref{lem_bigger k} implies that $(G,\sigma)$ has a $(k'd-1,dd')$-coloring and a $(k'd,dd')$-coloring as well. If $d$ is odd, then by Lemma \ref{gcd at least 3}, a $(k'd,dd')$-coloring of $(G,\sigma)$ yields a $(k',d')$-coloring of $(G,\sigma)$ and we are done. Let $d$ be even and $c''$ be a $(k'd-1,dd')$-coloring of $(G,\sigma)$. 
Define the mapping $c: V(G) \rightarrow \{1-\frac{d}{2},2-\frac{d}{2},\ldots,k'd-1-\frac{d}{2}\}$ as follows. For $x\in V(G)$ let 
\begin{eqnarray*}
c(x)=
\begin{cases}
c''(x)-(k'd-1), &\text{ if } c''(x)>k'd-1-\frac{d}{2},\\
c''(x), & \text{ otherwise. }
\end{cases}
\end{eqnarray*}
Define the mapping $c': V(G) \rightarrow \mathbb{Z}_{k'}$ by
$$c'(x)=\lfloor \frac{c(x)}{d}+\frac{1}{2}\rfloor, \text{ for all } x\in V(G).$$
We will show that $c'$ is a $(k',d')$-coloring of $(G,\sigma)$. 

Consider an edge $uv$.
First assume that $\sigma(uv)=1$. Without loss of generality, let $c(u)>c(v)$. Note that $1 \leq c(u)-c(v)\leq k'd-2$. Since $c''$ is a $(k'd-1,dd')$-coloring of $(G,\sigma)$,
$$dd'\leq c(u)-c(v)\leq k'd-1-dd'.$$
Therefore,
\begin{equation*}
\begin{split}
c'(u)-c'(v)
&=\lfloor \frac{c(u)}{d}+\frac{1}{2}\rfloor-\lfloor \frac{c(v)}{d}+\frac{1}{2}\rfloor\\
& \leq \lfloor k'-d'+\frac{c(v)-1}{d}+\frac{1}{2}\rfloor-\lfloor \frac{c(v)}{d}+\frac{1}{2}\rfloor\\
&\leq k'-d',
\end{split}
\end{equation*}
and
\begin{equation*}
\begin{split}
c'(u)-c'(v)
&=\lfloor \frac{c(u)}{d}+\frac{1}{2}\rfloor-\lfloor \frac{c(v)}{d}+\frac{1}{2}\rfloor\\
& \geq \lfloor d'+\frac{c(v)}{d}+\frac{1}{2}\rfloor-\lfloor \frac{c(v)}{d}+\frac{1}{2}\rfloor\\
&= d'.
\end{split}
\end{equation*}
Next assume that $\sigma(uv)=-1.$ Note that $2-d\leq c(u)+c(v)\leq 2(k'd-1)-d$. Since $c''$ is a $(k'd-1,dd')$-coloring of $(G,\sigma)$,
either
$$dd'\leq c(u)+c(v)\leq k'd-1-dd'$$
or
$$k'd-1+dd'\leq c(u)+c(v)\leq 2(k'd-1)-dd'.$$
In the former case,
\begin{equation*}
\begin{split}
c'(u)+c'(v)
&=\lfloor \frac{c(u)}{d}+\frac{1}{2}\rfloor+\lfloor \frac{c(v)}{d}+\frac{1}{2}\rfloor\\
& \leq \lfloor k'-d'-\frac{c(v)+1}{d}+\frac{1}{2}\rfloor-\lfloor \frac{c(v)}{d}+\frac{1}{2}\rfloor\\
&\leq \lfloor k'-d'-\frac{1}{d}+1\rfloor.\\
&=k'-d',
\end{split}
\end{equation*}
and
\begin{equation*}
\begin{split}
c'(u)+c'(v)
&=\lfloor \frac{c(u)}{d}+\frac{1}{2}\rfloor+\lfloor \frac{c(v)}{d}+\frac{1}{2}\rfloor\\
& \geq \lfloor d'-\frac{c(v)}{d}+\frac{1}{2}\rfloor+\lfloor \frac{c(v)}{d}+\frac{1}{2}\rfloor\\
&=d'.
\end{split}
\end{equation*}
In the latter case, by a similar calculation, we deduce
$$k'+d'\leq c'(u)+c'(v)\leq 2k'-d'.$$
Therefore, $c'$ is a $(k',d')$-coloring of $(G,\sigma)$.
\end{proof}

\begin{proposition}
If a signed graph $(G,\sigma)$ has a $(k,d)$-coloring with $d$ odd, and $k'$ and $d'$ are two positive integers such that $\frac{k}{d}=\frac{k'}{d'}$, then $(G,\sigma)$ has a $(k',d')$-coloring.
\end{proposition}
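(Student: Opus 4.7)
The plan is to combine Lemma \ref{lem_tk td} (scaling up) with Lemma \ref{gcd at least 3} (scaling down in steps of two). Since $\frac{k}{d}=\frac{k'}{d'}$ gives $kd'=k'd$, applying Lemma \ref{lem_tk td} with factor $d'$ turns the hypothesized $(k,d)$-coloring into a $(kd',dd')$-coloring, i.e.\ a $(k'd,d'd)$-coloring of $(G,\sigma)$.

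Next I would separate out the primitive form of the target ratio. Set $s=\gcd(k',d')$ and write $k'=s\bar k$, $d'=s\bar d$ with $\gcd(\bar k,\bar d)=1$. Then the coloring constructed in the first step is a $((ds)\bar k,(ds)\bar d)$-coloring, which is exactly the shape required to invoke Lemma \ref{gcd at least 3}: starting with $t=ds$ and with the coprime pair $(\bar k,\bar d)$, each application of that lemma yields a $((t-2)\bar k,(t-2)\bar d)$-coloring. Iterating this $\frac{(d-1)s}{2}$ times reduces the multiplier from $ds$ down to $s$, producing an $(s\bar k,s\bar d)=(k',d')$-coloring, as desired.

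The role of the odd-$d$ hypothesis is precisely to make $\frac{(d-1)s}{2}$ an integer, so that the sequence of multipliers $ds,\,ds-2,\,ds-4,\,\ldots,\,s$ actually lands on $s$. The remaining bookkeeping is checking the hypothesis $t\geq 3$ of Lemma \ref{gcd at least 3} at every step; the smallest multiplier at which the lemma is invoked is $s+2$, which is $\geq 3$ because $s\geq 1$, so the condition always holds. The coprimality requirement $\gcd(\bar k,\bar d)=1$ was arranged at the start.

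I expect the main (mild) obstacle to be the edge cases rather than the main chain: when $d=1$ the iteration is empty and the scaling step alone already produces a $(k',d')$-coloring, and when $s=1$ the target is already in lowest terms; both situations are subsumed by the count $\frac{(d-1)s}{2}$ being $0$ or by the last application having $t=s+2=3$. Once these are noted, the proof is just a concatenation of the two previous lemmas with no further computation.
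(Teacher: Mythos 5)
Your proof is correct and follows essentially the same route as the paper: scale up via Lemma \ref{lem_tk td} to a $(k'd,d'd)$-coloring and then descend in steps of two via Lemma \ref{gcd at least 3}, with the oddness of $d$ guaranteeing the descent lands exactly on $(k',d')$. The only difference is that you spell out the reduction to the coprime pair $(\bar k,\bar d)$, the iteration count $\frac{(d-1)s}{2}$, and the check $t\geq 3$, bookkeeping that the paper's two-line proof leaves implicit.
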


\begin{proof}
By Lemma \ref{lem_tk td}, $(G,\sigma)$ has a $(kd',dd')$-coloring, i.e., a $(k'd,dd')$-coloring
since $\frac{k}{d}=\frac{k'}{d'}$. Since $d$ is odd, by Lemma \ref{gcd at least 3}, $(G,\sigma)$ has a $(k',d')$-coloring of $(G,\sigma)$.
\end{proof}

\begin{theorem} \label{bounds}
If $(G,\sigma)$ is a signed graph, then $\chi((G,\sigma))-1\leq \chi_c((G,\sigma))\leq \chi((G,\sigma)).$
\end{theorem}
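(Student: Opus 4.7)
The plan is to split the statement into its two inequalities and handle each separately, relying heavily on the monotonicity already established in Theorem \ref{pro_bigger coloring}.

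For the upper bound $\chi_c((G,\sigma)) \leq \chi((G,\sigma))$, I would argue directly from the definitions. If $\chi((G,\sigma)) = n$, then by definition $(G,\sigma)$ has an $(n,1)$-coloring, so $\frac{n}{1} = n$ belongs to the set $\{\frac{k}{d} : (G,\sigma) \text{ has a }(k,d)\text{-coloring}\}$ whose infimum is $\chi_c((G,\sigma))$. Hence $\chi_c((G,\sigma)) \leq n = \chi((G,\sigma))$.

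For the lower bound $\chi((G,\sigma)) - 1 \leq \chi_c((G,\sigma))$, I plan to show that for every pair $(k,d)$ for which $(G,\sigma)$ has a $(k,d)$-coloring, one has $\chi((G,\sigma)) \leq \frac{k}{d} + 1$; the bound then transfers to the infimum. Given such $(k,d)$, set $K = \lfloor \frac{k}{d} \rfloor + 1$, which is a positive integer satisfying $K > \frac{k}{d}$ (both when $\frac{k}{d}$ is an integer and when it is not, since in the non-integer case $\lfloor \frac{k}{d}\rfloor + 1 > \frac{k}{d}$ trivially). Writing $K = \frac{K}{1}$ and comparing $\frac{K}{1} > \frac{k}{d}$, Theorem \ref{pro_bigger coloring} guarantees that $(G,\sigma)$ admits a $(K,1)$-coloring. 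Consequently $\chi((G,\sigma)) \leq K \leq \frac{k}{d} + 1$. Taking the infimum over all valid pairs $(k,d)$ yields $\chi((G,\sigma)) - 1 \leq \chi_c((G,\sigma))$.

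I do not anticipate a genuine obstacle here: the bulk of the work has already been invested in Theorem \ref{pro_bigger coloring}, which provides the crucial monotonicity of $(k,d)$-colorings along the ratio $\frac{k}{d}$. The only mild subtlety is to handle correctly the case in which $\frac{k}{d}$ is itself an integer, so that the chosen integer $K$ strictly exceeds $\frac{k}{d}$ and the hypothesis of Theorem \ref{pro_bigger coloring} is genuinely met; choosing $K = \lfloor \frac{k}{d}\rfloor + 1$ ensures this uniformly.
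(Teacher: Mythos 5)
Your proof is correct. Both your argument and the paper's rest on the same key ingredient, Theorem \ref{pro_bigger coloring}, and the upper bound is handled identically from the definitions; the difference lies in how the lower bound is organized. The paper argues by contradiction: it first invokes Theorem \ref{cor_min} to know that $\chi_c((G,\sigma))$ is rational and actually attained by some $(k,d)$-coloring with $\chi_c((G,\sigma))=\frac{k}{d}$, and then, assuming $\chi_c((G,\sigma))<\chi((G,\sigma))-1$, applies Theorem \ref{pro_bigger coloring} to produce a $(\chi((G,\sigma))-1,1)$-coloring, contradicting the definition of $\chi$. You instead prove directly that every pair $(k,d)$ admitting a coloring satisfies $\chi((G,\sigma))\leq \lfloor\frac{k}{d}\rfloor+1\leq\frac{k}{d}+1$ (your choice $K=\lfloor\frac{k}{d}\rfloor+1$ correctly guarantees the strict inequality $\frac{k}{d}<K$ needed in Theorem \ref{pro_bigger coloring}, and $K\geq 3$ so the $(K,1)$-coloring is well defined), and then transfer the bound to the infimum. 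This buys you independence from Theorem \ref{cor_min}: your argument needs neither rationality nor attainment of $\chi_c$, so it would work even before the minimum result is established, whereas the paper's version is marginally shorter given that Theorem \ref{cor_min} is already in hand. The only implicit point common to both proofs is that the set of realizable ratios $\frac{k}{d}$ is nonempty, i.e.\ $\chi((G,\sigma))$ is finite, which holds since distinct non-inverse colors in a large cyclic group always yield a $(k,1)$-coloring.
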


\begin{proof}
By the definitions, we have $\chi_c((G,\sigma))\leq \chi((G,\sigma)).$
On the other hand, suppose to the contrary that $\chi_c((G,\sigma))< \chi((G,\sigma))-1$.
Theorem \ref{cor_min} implies that $\chi_c((G,\sigma))$ is a rational number.
We may assume $(G,\sigma)$ has a $(k,d)$-coloring with $\chi_c((G,\sigma))=\frac{k}{d}$.
By Theorem \ref{pro_bigger coloring}, $(G,\sigma)$ has a $(\chi((G,\sigma))-1,1)$-coloring, a contradiction.
\end{proof}

If $G$ is an unsigned graph, then $\chi(G) - 1 < \chi_c(G) \leq \chi(G)$, see \cite{Vince_1988}. We will show that there are
signed graphs $(G,\sigma)$ with $\chi((G,\sigma))-1 = \chi_c((G,\sigma))$, see Figure \ref{Fig1}.

\begin{figure}[hh]
	\centering
	\includegraphics[width=7cm]{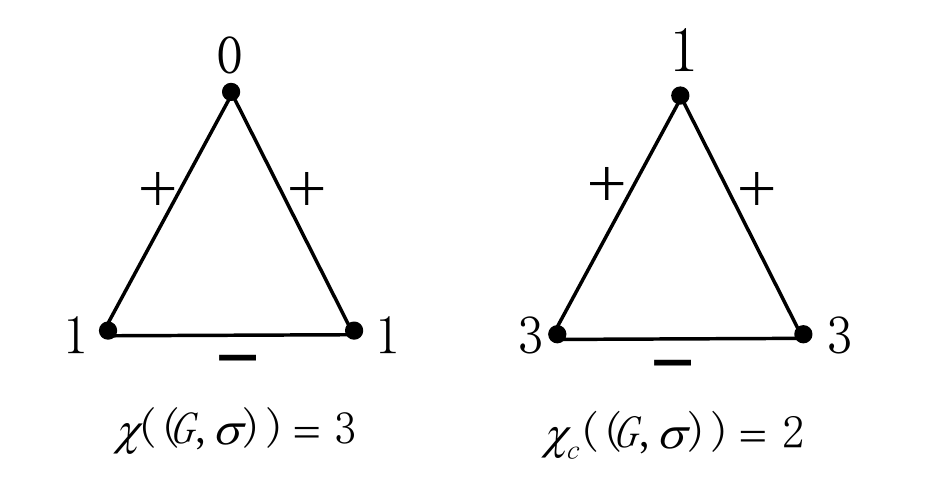}\\
	\caption{An example $\chi-1=\chi_c$}\label{Fig1}
\end{figure}

\begin{theorem} \label{charac_X_c=X-1}
 Let $(G,\sigma)$ be a signed graph with $\chi((G,\sigma))=t+1$. The following statements are equivalent.
\begin{enumerate}
\item $\chi_c((G,\sigma)) = t$.
\item $(G,\sigma)$ has a  $(2t,2)$-coloring.
\end{enumerate}
\end{theorem}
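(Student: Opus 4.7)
The plan is to argue both directions using tools already in hand.

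For $(2)\Rightarrow(1)$ I would just combine definitions with the earlier bound. A $(2t,2)$-coloring witnesses $\chi_c((G,\sigma))\leq 2t/2 = t$, while Theorem \ref{bounds} gives $\chi_c((G,\sigma))\geq \chi((G,\sigma))-1 = t$, forcing equality.

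For $(1)\Rightarrow(2)$ I would start by using the fact that $\chi_c$ is attained. By Theorem \ref{cor_min}, there exist positive integers $k,d$ (with $k\leq 4n$) such that $(G,\sigma)$ has a $(k,d)$-coloring and $\frac{k}{d}=t$, i.e.\ $k=td$. If $d=1$, then the $(t,1)$-coloring contradicts $\chi((G,\sigma))=t+1$, so $d\geq 2$. The key observation is that the $(td,d)$-coloring can be viewed as a $(t'\cdot t, t'\cdot 1)$-coloring with $t'=d$, where $\gcd(t,1)=1$. Whenever $d\geq 3$, Lemma \ref{gcd at least 3} applies and produces a $((d-2)t,d-2)$-coloring of $(G,\sigma)$; the new pair has the same ratio $t$ and the same coprime ``base'' $(t,1)$, so the lemma can be iterated.

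Now I would run this iteration down to a base case. If $d$ is odd, the iteration produces $(G,\sigma)$-colorings with $d$-parameter $d, d-2, d-4, \dots, 3, 1$; the last one is a $(t,1)$-coloring, again contradicting $\chi((G,\sigma))=t+1$. Therefore $d$ must be even, and the iteration instead terminates at $d$-parameter equal to $2$, yielding the desired $(2t,2)$-coloring of $(G,\sigma)$.

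There is no serious obstacle here, since the hard work has already been done in Lemma \ref{gcd at least 3} and Theorem \ref{cor_min}; the only point that needs care is checking at each step that the hypothesis $t'\geq 3$ of Lemma \ref{gcd at least 3} is satisfied, which is immediate from the parity argument, and that ``the infimum is a minimum'' gives us an honest $(k,d)$-coloring to start iterating from rather than merely a sequence of ratios tending to $t$.
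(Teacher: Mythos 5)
Your proposal is correct and follows essentially the same route as the paper: for $(1)\Rightarrow(2)$ it takes a $(td,d)$-coloring guaranteed by Theorem~\ref{cor_min} and iterates Lemma~\ref{gcd at least 3} on the coprime base $(t,1)$ to rule out odd $d$ and descend to a $(2t,2)$-coloring, and for $(2)\Rightarrow(1)$ it combines the $(2t,2)$-coloring with the lower bound $\chi_c\geq\chi-1$. Your write-up merely makes the iteration and the appeal to Theorem~\ref{bounds} more explicit than the paper's terser wording.
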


\begin{proof} 
($\Rightarrow$) Let $\chi_c((G,\sigma)) = t$. For each $(k,d)$-coloring of $(G,\sigma)$ with $\frac{k}{d}=t$ it follows that
$d>1$ since for otherwise we would get a $(t,1)$-coloring. If $d$ is odd, then Lemma \ref{gcd at least 3} implies, 
that there is $(t,1)$-coloring, a contradiction.
Hence, $d$ is even and therefore, $k$ as well. Again with Lemma \ref{gcd at least 3} it follows that there is a $(2t,2)$-coloring.

($\Leftarrow$) Since $(G,\sigma)$ does not have a $(t,1)$-coloring but it has a $(2t,2)$-coloring, it follows that $\chi_c((G,\sigma)) =t = \chi((G,\sigma)) - 1$.
\end{proof}

\begin{theorem} \label{not<}
\begin{enumerate}
\item If $(G,\sigma)$ is antibalanced and not bipartite, then $\chi((G,\sigma)) = 3$, and $\chi_c((G,\sigma)) = 2$.
\item For every even $k \geq 2$, there is a signed graph $(G,\sigma)$ with $\chi((G,\sigma)) -1 = \chi_c((G,\sigma)) = k$.
\end{enumerate}
\end{theorem}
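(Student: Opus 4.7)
The theorem has two parts; I treat them separately, beginning by applying Proposition \ref{switching} to part (1), which lets me assume $\sigma\equiv -1$, so that every edge constraint in a $(k,d)$-coloring becomes $|c(u)+c(v)|_k\geq d$.

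For part (1), both upper bounds will be achieved by a constant coloring. Setting $c\equiv 1$ in $\mathbb{Z}_3$ is a $(3,1)$-coloring, because every edge contributes $c(u)+c(v)=2\not\equiv 0\pmod 3$; this yields $\chi((G,\sigma))\leq 3$. Setting $c\equiv 1$ in $\mathbb{Z}_4$ is a $(4,2)$-coloring, because $|1+1|_4=2\geq 2$; this yields $\chi_c((G,\sigma))\leq 2$. For the matching lower bound on $\chi$, a hypothetical $(2,1)$-coloring would force $c(u)\neq -c(v)\pmod 2$, which collapses to $c(u)\neq c(v)$ since every element of $\mathbb{Z}_2$ is self-inverse; this is a proper $2$-coloring of $G$, contradicting non-bipartiteness. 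The bound $\chi_c((G,\sigma))\geq 2$ is automatic from $k\geq 2d$ in every $(k,d)$-coloring.

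For part (2), the case $k=2$ is an immediate corollary of part (1) applied to any antibalanced odd cycle (for instance, the signed triangle $(K_3,-1)$ as in Figure \ref{Fig1}). For even $k\geq 4$, the plan is to invoke Theorem \ref{charac_X_c=X-1}, which reduces the statement to exhibiting a signed graph on some vertex set that admits a $(2k,2)$-coloring but no $(k,1)$-coloring. I will construct such a graph explicitly, following the approach illustrated by Figure \ref{Fig1}; natural candidates are suitably signed versions of $K_{k+1}$ or similarly dense graphs. The key phenomenon exploited is that $\mathbb{Z}_{2k}$ contains an extra self-inverse element ($k$ itself) unavailable in $\mathbb{Z}_k$, which provides enough flexibility to assemble a $(2k,2)$-coloring while leaving the chromatic obstruction intact.

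The main obstacle is part (2) for $k\geq 4$. Exhibiting the $(2k,2)$-coloring should be a direct calculation once the signature is pinned down, but ruling out every $(k,1)$-coloring requires a structural argument: I expect this to involve a case analysis on how the colors are distributed over the two self-inverse elements $0,k/2$ of $\mathbb{Z}_k$, combined with a parity or counting argument on the unbalanced cycles forced by the signature, to derive a contradiction. Making the construction and the non-existence argument work uniformly across all even $k\geq 4$ is what makes this part delicate, since the arithmetic of $\mathbb{Z}_k$ varies with $k$.
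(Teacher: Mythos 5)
Part (1) of your proposal is correct and essentially identical to the paper's argument: switch to the all-negative signature, use the constant coloring $1$ in $\mathbb{Z}_3$ and in $\mathbb{Z}_4$ for the upper bounds, and observe that a $(2,1)$-coloring would be a proper $2$-coloring of $G$ because both elements of $\mathbb{Z}_2$ are self-inverse.

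Part (2) for even $k\geq 4$, however, is not a proof but a research plan, and this is where the entire content of the statement lies. You correctly identify that it suffices to exhibit a signed graph with a $(2k,2)$-coloring but no $(k,1)$-coloring (indeed Theorem \ref{bounds} alone already gives this reduction, without needing Theorem \ref{charac_X_c=X-1}), but you neither pin down the graph nor verify either property; you only say that a construction ``should'' exist among signed versions of $K_{k+1}$ or similar graphs and that the non-existence of a $(k,1)$-coloring will ``require a structural argument.'' That is precisely the part that must be supplied. The paper does it as follows: take $k$ disjoint connected all-negative signed graphs $(G_1,\sigma^1),\dots,(G_k,\sigma^k)$, each with at least two vertices (e.g.\ $k$ negative copies of $K_2$), and join every pair of vertices in distinct parts by a positive edge; the simplest instance is $K_{2k}$ with a negative perfect matching, not a signed $K_{k+1}$. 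The positive subgraph is complete $k$-partite, so in a $(k,1)$-coloring the $k$ parts carry pairwise disjoint color sets, forcing each part to be monochromatic; the negative edge inside part $i$ then forces its color $a_i$ to satisfy $2a_i\not\equiv 0\pmod k$, i.e.\ to be non-self-inverse. Since $k$ is even, $\mathbb{Z}_k$ has only $k-2$ non-self-inverse elements, a pigeonhole contradiction. Conversely $c(v)=2i-1$ for $v\in V(G_i)$ is a $(2k,2)$-coloring, since $|2(i-j)|_{2k}\geq 2$ for $i\neq j$ and $|4i-2|_{2k}\geq 2$ because $2i-1$ is odd while $k$ is even. Note also that your heuristic for why $\mathbb{Z}_{2k}$ helps is off: the working $(2k,2)$-coloring uses only non-self-inverse colors, so the ``extra self-inverse element $k$'' plays no role; what matters is that $\mathbb{Z}_{2k}$ contains $k$ pairwise non-inverse odd residues at mutual circular distance at least $2$. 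Without an explicit construction and both verifications, part (2) remains unproved.
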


\begin{proof}
1. The mapping $c$ from $V(G)$ to $\mathbb{Z}_3$ with $c(v) = 1$ is a 3-coloring of $(G,\sigma)$. 
Hence, $\chi((G,\sigma)) = 3$, by statement 1. If we consider $c$ as a mapping from $V(G)$ to $\mathbb{Z}_4$, then $c$ is a $(4,2)$-coloring of $(G,\sigma)$. Hence, $\chi_c((G,\sigma)) = 2$. 

2. For $i \in \{1, \dots,n\}$ let $(G_i,\sigma^i)$ be a connected signed graph with at least two vertices and all edges negative. 
Take $(G_1,\sigma^1), \dots, (G_n,\sigma^n)$, and for every $j \in \{1, \dots, n\}$ and every $v \in V(G_j)$ connect $v$ to every vertex of $(\bigcup_{i=1}^n  V(G_i)) - V(G_j)$.
The resulting graph is denoted by $(K_n^*,\sigma_n)$.

\begin{claim} If $n$ is even, then
$\chi((K_n^*,\sigma_n)) = n+1$ and $\chi_c((K_n^*,\sigma_n)) = n$. If $n$ is odd, then $\chi((K_n^*,\sigma_n)) = n+2$ and $\chi_c((K_n^*,\sigma_n)) = n+1$.
\end{claim}

Clearly, the all positive subgraph $K_n^* - N_{\sigma_n}$ has chromatic number $n$. Since for all $i \in \{1, \dots,n\}$ the signed subgraph $(G_i,\sigma^i)$
has only negative edges, and $G_i$ has at least one edge, it follows that all used colors are not self-inverse. Since $n$ is even, it follows that $\chi((K_n^*,\sigma_n)) = n+1$.
Furthermore $c: V(K_n^*) \longrightarrow \mathbb{Z}_{2n}$ with $c(v) = 2i - 1$ if $v \in V(G_i)$ is a $(2n,2)$-coloring of $(K_n^*,\sigma_n)$.
Hence, $\chi_c((K_n^*,\sigma_n)) = n$.

If $n$ is odd, the statement will be proved analogously, and the Claim is proved. Statement 1 of this theorem is the case $n=1$.
\end{proof}

Note, that Lemma \ref{gcd =2} does not apply to the graphs of Theorem \ref{not<} since the cardinality of the set of colors is smaller than the order of the graphs.
It would be of interest whether a statement like Theorem \ref{not<} 2. is also true for odd $k$. 
Furthermore, is there a non-trivial characterization of the signed graphs with $\chi((G,\sigma)) - 1 = \chi_c((G,\sigma))$?

The next theorem shows that if the lower bound in Theorem \ref{bounds} is not attained, then it can be improved. 

\begin{theorem}
	Let $(G,\sigma)$ be a signed graph on $n$ vertices, then either $ \chi((G,\sigma))-1=\chi_c((G,\sigma))$ or $(\chi((G,\sigma))-1)(1+\dfrac{1}{4n-1}) \leq \chi_c((G,\sigma))\leq \chi((G,\sigma))$. 
In particular, if $\chi((G,\sigma))-1\neq \chi_c((G,\sigma))$, then $\chi((G,\sigma)) - \chi_c((G,\sigma)) < 1 - \frac{1}{2n}$.
\end{theorem}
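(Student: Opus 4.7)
The plan is to use Theorem \ref{cor_min} to realize $\chi_c((G,\sigma))$ as a rational $k/d$ with bounded numerator $k\le 4n$, and then to push the strict inequality $\chi_c((G,\sigma))>\chi((G,\sigma))-1$ via the integrality of $k$ and of $(T-1)d$.

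Set $T=\chi((G,\sigma))$. By Theorem \ref{bounds} we have $T-1\le \chi_c((G,\sigma))\le T$, so we may assume $\chi_c((G,\sigma))>T-1$; otherwise the first alternative of the theorem already holds. Theorem \ref{cor_min} yields positive integers $k,d$ with $k\le 4n$ and $\chi_c((G,\sigma))=k/d$. Since $k/d>T-1$ and both $k$ and $(T-1)d$ are integers, we must have $k\ge (T-1)d+1$. Combined with $k\le 4n$ this gives $(T-1)d\le 4n-1$, and since every $(k,d)$-coloring satisfies $k\ge 2d$ we have $T\ge 2$; hence $1/d\ge (T-1)/(4n-1)$. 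Substituting,
\[
\chi_c((G,\sigma))=\frac{k}{d}\ge (T-1)+\frac{1}{d}\ge (T-1)\left(1+\frac{1}{4n-1}\right),
\]
which, together with the upper bound $\chi_c((G,\sigma))\le T$ from Theorem \ref{bounds}, gives the main two-sided inequality.

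For the ``in particular'' assertion, subtract the lower bound from $T$ to obtain
\[
\chi((G,\sigma))-\chi_c((G,\sigma))\le 1-\frac{T-1}{4n-1}.
\]
When $T\ge 3$, $(T-1)/(4n-1)\ge 2/(4n-1)>1/(2n)$ and the desired estimate $\chi-\chi_c<1-1/(2n)$ is immediate. The case $T=2$ is not covered by this computation, but in that case $\chi_c((G,\sigma))\ge 2$ (since $k\ge 2d$) combined with $\chi_c((G,\sigma))\le T=2$ forces $\chi_c((G,\sigma))=2=\chi((G,\sigma))$, so the difference is $0<1-1/(2n)$. The whole argument is a short computation once the two earlier theorems are in hand; the only real content is the integrality step $k\ge (T-1)d+1$, and the only care needed is the separate treatment of $T=2$, where the general bound $1-(T-1)/(4n-1)$ is too weak and must be replaced by the direct observation $\chi_c\ge 2$.
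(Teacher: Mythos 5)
Your proof is correct and follows essentially the same route as the paper: both invoke Theorem \ref{cor_min} to write $\chi_c((G,\sigma))$ as a fraction with numerator at most $4n$, use the integrality of $\chi((G,\sigma))-1$ to force a gap of at least one over the denominator, and convert this into the multiplicative bound $(\chi((G,\sigma))-1)\bigl(1+\tfrac{1}{4n-1}\bigr)\leq\chi_c((G,\sigma))$. Your explicit treatment of the case $\chi((G,\sigma))=2$ in the final estimate is in fact slightly more careful than the paper's passing claim that $2q<p$, which fails (harmlessly) exactly when $\chi_c((G,\sigma))=2$.
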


\begin{proof} 
	By Theorem \ref{bounds}, it suffices to show, that  if  $ \chi((G,\sigma))-1\neq \chi_c((G,\sigma))$ then $ (\chi((G,\sigma))-1)(1+\dfrac{1}{4n-1}) \leq \chi_c((G,\sigma)). $
	By Theorem \ref{cor_min}, we may assume that $\chi_c((G,\sigma))=\dfrac{p}{q}$, where $p$ and $q$ are coprime integers and $p\leq 4n$.  
	Then 
\begin{equation} \label{eq}
\chi_c((G,\sigma))-(\chi((G,\sigma))-1)\geq \dfrac{1}{q}=\dfrac{\chi_c((G,\sigma))}{p}\geq \dfrac{\chi_c((G,\sigma))}{4n}.
\end{equation}
	By simplifying  the inequality, we get $$ (\chi((G,\sigma))-1)(1+\dfrac{1}{4n-1}) \leq \chi_c((G,\sigma)). $$

Since $2q<p$, it follows with the first inequality of equation (\ref{eq}) that $\chi((G,\sigma)) - \chi_c((G,\sigma)) < 1 - \frac{1}{2n}$.
\end{proof}

\section{$r$-colorings} \label{equivalence}

\begin{theorem}\label{r-c}
Let $(G,\sigma)$ be a signed graph and $k,d$ be positive integers with $2d\leq k$. $(G,\sigma)$ has a $(2k,2d)$-coloring if and only if $(G,\sigma)$ has a circular $\frac{k}{d}$-coloring.
Furthermore, if $(G,\sigma)$ has a circular $\frac{k}{d}$-coloring such that a common denominator of the used colors is odd, then
$(G,\sigma)$ has a $(k,d)$-coloring.
\end{theorem}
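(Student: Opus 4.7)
The plan is to prove both directions of the biconditional separately and then handle the furthermore clause via Lemma~\ref{gcd at least 3}. For the forward direction, I would simply scale: given a $(2k,2d)$-coloring $c$, set $f(v):=c(v)/(2d)\in[0,k/d)$. Dividing the inequality $|c(x)-\sigma(e)c(y)|_{2k}\geq 2d$ by $2d$ immediately yields $|f(x)-\sigma(e)f(y)|_{k/d}\geq 1$, which is the circular $k/d$-coloring condition (recall that negation on the circle $S^{k/d}$ sends $x$ to $k/d-x$, so for a negative edge this reads $|f(x)+f(y)|_{k/d}\geq 1$).

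The backward direction is more delicate, since the naive rounding $c(v):=\lfloor 2d\,f(v)\rfloor$ succeeds for positive edges (where the two rounding errors partially cancel in $c(x)-c(y)$) but can fail by one unit for negative edges (where they may combine in $c(x)+c(y)$). My plan is to first produce an equivalent circular $k/d$-coloring $\tilde f$ whose values all lie in $\tfrac{1}{2d}\mathbb{Z}$; then $c(v):=2d\,\tilde f(v)\in\mathbb{Z}_{2k}$ is automatically a $(2k,2d)$-coloring. To construct $\tilde f$: each edge $e=xy$ has a unique integer wrap $j_e$ with $f(x)-\sigma(e)f(y)+j_e(k/d)\in[1,(k/d)-1]$, and fixing these wraps yields a polytope
$$P=\Bigl\{f'\in[0,k/d]^{V(G)}\,:\,f'(x)-\sigma(e)f'(y)+j_e(k/d)\in[1,(k/d)-1]\text{ for every edge }e=xy\Bigr\}$$
that contains $f$ and consists entirely of circular $k/d$-colorings. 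After rescaling by $d$, the defining matrix of $P$ has entries in $\{-1,0,+1\}$ with exactly two nonzero entries per row ($+1,-1$ for positive edges, $+1,+1$ for negative ones) --- the incidence matrix of the signed graph $(G,\sigma)$. Such matrices are classically half-unimodular: every nonsingular square submatrix has determinant in $\{\pm 1,\pm 2\}$, so every vertex of the scaled $P$ lies in $(\tfrac{1}{2}\mathbb{Z})^{V(G)}$, and any such vertex divided by $d$ is the required $\tilde f$.

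For the furthermore clause, assume $f$ is a circular $k/d$-coloring whose values all lie in $\tfrac{1}{D}\mathbb{Z}$ with $D$ odd. After reducing $k/d$ to lowest terms if necessary (and rescaling back via Lemma~\ref{lem_tk td}), we may assume $\gcd(k,d)=1$. Then $c(v):=Dd\,f(v)\in\mathbb{Z}_{Dk}$ is a $(Dk,Dd)$-coloring, and since $D$ is odd I would iterate Lemma~\ref{gcd at least 3} --- each application turning $(tk,td)$ into $((t-2)k,(t-2)d)$ --- to descend all the way to a $(k,d)$-coloring.

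The main obstacle is the half-integrality step: the defining matrix of the scaled polytope is not in general totally unimodular (the underlying signed graph can be unbalanced), and a short self-contained justification --- for example, by analyzing the connected components of the tight-constraint subgraph at a vertex of $P$, where each unbalanced component contributes at most one factor of $2$ to the denominator --- would be needed for a complete write-up.
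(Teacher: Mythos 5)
Your forward direction and your handling of the ``furthermore'' clause coincide with the paper's: scale by $\frac{1}{2d}$ one way; clear the odd common denominator $D$ the other way to get a $(Dk,Dd)$-coloring and descend to $(k,d)$ by repeated application of Lemma~\ref{gcd at least 3} (after reducing to $\gcd(k,d)=1$ and scaling back with Lemma~\ref{lem_tk td}). Where you genuinely diverge is the backward direction. The paper first perturbs each irrational color along a maximal tight chain $P_1,\dots,P_n$ (together with the antipodal chain $Q_i=r-P_i$) to make all colors rational, then clears an even common denominator $m$ and descends from $(mk,md)$ to $(2k,2d)$ via Lemma~\ref{gcd at least 3}; your polyhedral argument collapses both steps into one, since a vertex of the wrap-fixed polytope is automatically rational with denominator dividing $2d$ and hence directly yields a $(2k,2d)$-coloring (identifying the value $k/d$ with $0$ if a box constraint is tight there). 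This is an attractive alternative that makes the irrationality discussion disappear entirely, but it rests on a half-integrality fact that you state incorrectly before correctly flagging the issue: it is \emph{not} true that every nonsingular square submatrix of a signed-graph incidence matrix has determinant in $\{\pm 1,\pm 2\}$ --- two vertex-disjoint unbalanced cycles already give $\pm 4$ --- only that such determinants are $0$ or $\pm 2^{t}$. The repair you sketch is the right one and does go through: the tight system at a vertex is block-diagonal over the connected components of the tight subgraph, each nonsingular block is either a spanning tree rooted by tight box constraints (determinant $\pm 1$) or a spanning tree plus one edge closing an unbalanced cycle (determinant $\pm 2$), and Cramer's rule applied block by block puts every coordinate of the $d$-scaled vertex in $\tfrac{1}{2}\mathbb{Z}$, i.e.\ every coordinate of $\tilde f$ in $\tfrac{1}{2d}\mathbb{Z}$. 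With that lemma written out (classical for bidirected incidence matrices, but not available in this paper) your proof is complete; the paper's route buys self-containedness, using only Lemma~\ref{gcd at least 3}, at the price of the somewhat delicate perturbation argument, while yours buys a cleaner structural explanation of why the factor $2$ in $(2k,2d)$ is exactly the unbalancedness of the signed graph showing up as a determinant.
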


\begin{proof}
We give an analogous proof to the one for unsigned graphs (see Theorem 1 in \cite{Zhu_1992}).

Suppose that $c: V(G)\mapsto \mathbb{Z}_{2k}$ is a $(2k,2d)$-coloring of $(G,\sigma)$.
For each $v \in V(G)$ set $f(v)=\frac{c(v)}{2d}$.
It is easy to verify that $f$ is a circular $\frac{k}{d}$-coloring of $(G,\sigma)$.

On the other side, suppose that $f$ is a circular $r$-coloring of $(G,\sigma)$ with $r=\frac{k}{d}$ and $gcd(k,d)=1$.
Let $S=\{f(v)\colon\ v\in V(G)\}$. The cardinality of $S$ is finite since $G$ is a finite graph. We first show that we can assume that all elements of $S$ are rational numbers.
We will show that each non-rational color can be shifted to a rational color without creating a new pair of colors with distance less than 1. 
Let $s \in S$ and suppose that $s$ is not a rational number. Let $P=P_1, \dots, P_n$ be the longest sequence of pairwise distinct points in $[0,r)$ which satisfies the following constraints:
\begin{itemize}
  \setlength{\itemsep}{0pt}
  \item $s \in P$, and
  \item $\{P_i,r-P_i\}\cap S \neq \emptyset$ and $P_{i+1}=[P_i+1]_r$, where $P_i$ is the element of $P$ in the $i$ place.
\end{itemize}

Define $Q$ to be the sequence consisting of the opposite points of $P$. More precisely, $Q_i=r-P_i$.
Let $\overline{P} = S \cap P$ and $\overline{Q} = S \cap Q$.
Let $\varepsilon$ be a positive real number such that $s+\varepsilon$ is rational.
We shift the colors in $\overline{P}$ together by distance $\varepsilon$ clockwise, and the ones in $\overline{Q}$ together by the same distance anticlockwise. Choose $\varepsilon$ to be small enough.
It is easy to see that this shift is the one required if we can show that the sequences $\overline{P}$ and $\overline{Q}$ contains no common colors. If $\alpha$ is a common color of $\overline{P}$ and $\overline{Q}$, then $s-\alpha$ is an integer and so does $r-s-\alpha$. It follows that $r-2s$ is an
integer, contradicting with the fact that $r$ is a rational number but $s$ not.

Let $m$ be a common denominator of all the colors in $S$. Then the mapping $f'\colon\ V(G)\mapsto \mathbb{Z}_{mk}$ defined as $f'(v)=f(v)md$ is a $(mk,md)$-coloring of $(G,\sigma)$.
Since $m$ can be chosen to be even it follows with Lemma \ref{gcd at least 3} that there is $(2k,2d)$-coloring of $(G,\sigma)$. 
Furthermore, if $m$ is odd, then it follows again with Lemma \ref{gcd at least 3} that $(G,\sigma)$ has a $(k,d)$-coloring.
\end{proof}

The `$(2k,2d)$-coloring' in the previous theorem can not be replaced by `$(k,d)$-coloring' since otherwise there exist counterexamples. The unbalanced triangle is one of the signed graphs that has a circular 2-coloring but has no (2,1)-colorings.

With Theorems \ref{cor_min} and \ref{r-c} we deduce the following statement.

\begin{theorem} If $(G,\sigma)$ is a signed graph, then
$\chi_c((G,\sigma)) = \min \{r :  (G,\sigma)$ has a circular $r$-coloring$\}.$
\end{theorem}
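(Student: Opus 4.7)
The plan is to show that $A := \{r \geq 1 : (G,\sigma) \text{ has a circular } r\text{-coloring}\}$ has minimum equal to $\chi_c((G,\sigma))$, by combining Theorems \ref{cor_min} and \ref{r-c} with a short density argument.

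First, I would verify that $\chi_c((G,\sigma)) \in A$. By Theorem \ref{cor_min} there exist positive integers $k_0, d_0$ such that $(G,\sigma)$ has a $(k_0, d_0)$-coloring and $\chi_c((G,\sigma)) = k_0/d_0$. Lemma \ref{lem_tk td} with $t = 2$ produces a $(2k_0, 2d_0)$-coloring, which by Theorem \ref{r-c} converts into a circular $(k_0/d_0)$-coloring. Hence $\chi_c((G,\sigma)) \in A$, so $\inf A \leq \chi_c((G,\sigma))$.

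Second, I would show that $A$ is upward closed. Given a circular $r$-coloring $f : V(G) \to [0, r)$ and any real $r' \geq r$, the rescaled map $f' = (r'/r)f$ is a circular $r'$-coloring: each relevant absolute difference is multiplied by the factor $r'/r \geq 1$, so any value originally in $[1, r-1]$ lands in $[r'/r, r' - r'/r] \subseteq [1, r'-1]$, and the same scaling applies to $|f(x) + f(y) - r|$ on negative edges. Consequently $A \supseteq (\inf A, \infty)$.

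Third, I would derive $\chi_c((G,\sigma)) \leq \inf A$ by rational approximation. For any rational $r = k/d \in A$ with $\gcd(k,d) = 1$ and $2d \leq k$, Theorem \ref{r-c} yields a $(2k, 2d)$-coloring, so by definition $\chi_c((G,\sigma)) \leq 2k/(2d) = r$. Given $\varepsilon > 0$, upward closure together with density of $\mathbb{Q}$ in $\mathbb{R}$ produces a rational $q \in A \cap (\inf A, \inf A + \varepsilon)$, whence $\chi_c((G,\sigma)) \leq q < \inf A + \varepsilon$. Letting $\varepsilon \to 0$ yields $\chi_c((G,\sigma)) \leq \inf A$; combining this with the first step forces equality, and since $\chi_c((G,\sigma)) \in A$ the infimum is a minimum.

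The main obstacle sidestepped by this strategy is a potential circular $r$-coloring for some irrational $r$: rather than trying to adapt the rationalization-by-shift argument in the proof of Theorem \ref{r-c} to an irrational circumference (where the key disjointness of $\overline{P}$ and $\overline{Q}$ no longer follows from the contradiction $r - 2s \in \mathbb{Z}$), one only needs to approach $\inf A$ from above by rationals, which works because rescaling an $r$-coloring onto a larger circle is always legal.
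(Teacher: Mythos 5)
Your proof is correct and follows essentially the same route as the paper, which simply deduces the theorem from Theorems \ref{cor_min} and \ref{r-c}. The one genuine addition is your rescaling argument showing that the set of admissible $r$ is upward closed, which lets you approach $\inf A$ from above by rationals and thereby dispose of possible irrational elements of $A$ --- a point the paper leaves implicit, since Theorem \ref{r-c} only converts circular $r$-colorings with \emph{rational} $r$ into $(2k,2d)$-colorings. This is a worthwhile clarification rather than a different method.
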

 
\section{Concluding remarks}

First we determine the circular chromatic number of some specific graphs. For $n \geq 3$, let $C_n$ denote the circuit with $n$ vertices. 

\begin{proposition}
Let  $k$ be a positive integer. 

\begin{enumerate}
\item If $(C_{2k+1},\sigma)$ is balanced, then $\chi_c((C_{2k+1},\sigma))=2+\frac{1}{k}$, otherwise $\chi_c((C_{2k+1},\sigma))=2$. Furthermore $\chi((C_{2k+1},\sigma))=3$.
\item $\chi((G,\sigma)) = 2$ if and only if $G$ is bipartite. Furthermore, $\chi((G,\sigma)) = \chi_c((G,\sigma))$ if $G$ is bipartite.
\end{enumerate}
\end{proposition}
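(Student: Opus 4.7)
This part is essentially tautological once one inspects $\mathbb{Z}_2$: since $-1\equiv 1\pmod 2$, the signed $(2,1)$-coloring condition $|c(u)-\sigma(e)c(v)|_2\geq 1$ collapses to the unsigned condition $c(u)\neq c(v)$ on every edge independently of $\sigma$. Hence $(G,\sigma)$ admits a $(2,1)$-coloring iff $G$ is properly $2$-colorable iff $G$ is bipartite, which gives the first equivalence. For the second claim, the defining inequality $k\geq 2d$ of a $(k,d)$-coloring already forces $\chi_c((G,\sigma))\geq 2$ unconditionally; combined with $\chi_c\leq\chi$ from Theorem \ref{bounds}, bipartiteness of $G$ pinches $\chi_c((G,\sigma))=\chi((G,\sigma))=2$.

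\textbf{Plan for Part 1.} I would split on the balance of $(C_{2k+1},\sigma)$ and use Proposition \ref{switching} to reduce to a canonical representative of the switching class. In the balanced case, switching to the all-positive signature makes the signed constraint $|c(u)-\sigma(e)c(v)|_p\geq q$ literally identical to the unsigned circular-coloring constraint, so $\chi_c$ and $\chi$ for $(C_{2k+1},\sigma)$ coincide with the corresponding invariants of the unsigned odd cycle, namely $2+\frac{1}{k}$ (Vince \cite{Vince_1988}) and $3$. For the unbalanced case, I would first record the standard fact that the product $\prod_{e\in E}\sigma(e)$ is a switching invariant on a cycle, because each vertex switch flips exactly two signs on the cycle. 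An unbalanced signature on $C_{2k+1}$ therefore has product $-1=(-1)^{2k+1}=(-1)^{|E|}$, which is precisely the product of the all-negative signature; thus $(C_{2k+1},\sigma)$ is antibalanced and, being an odd cycle, not bipartite, so Theorem \ref{not<}.1 immediately yields $\chi=3$ and $\chi_c=2$.

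\textbf{Expected obstacle.} The whole proposition is an assembly of Theorems \ref{bounds} and \ref{not<}, Proposition \ref{switching}, and Vince's classical formula for $\chi_c(C_{2k+1})$, so I do not foresee any genuine difficulty. The only step requiring a short argument beyond citations is the observation that on an all-positive signed graph the signed and unsigned circular-coloring notions coincide, and this is immediate from the definitions.
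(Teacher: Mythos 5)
Your proof is correct, and Part 2 and the balanced half of Part 1 match the paper's argument essentially verbatim (reduction to proper $2$-coloring in $\mathbb{Z}_2$, and switching to the all-positive signature to import Vince's formula $\chi_c(C_{2k+1})=2+\frac{1}{k}$). Where you diverge is the unbalanced case: the paper switches to the representative with exactly one negative edge $uv$ and writes down an explicit $(4,2)$-coloring (color $1$ on $u$ and $v$, then $3,1$ alternating around the cycle), whereas you observe that an unbalanced odd cycle is antibalanced and non-bipartite and then invoke Theorem \ref{not<}.1. Your route is the more economical one, since it reuses a result already proved in the paper instead of re-deriving a coloring by hand; the paper's construction has the minor virtue of being self-contained at that point in the text. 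One small step you should make explicit: you prove that the edge-sign product is a switching \emph{invariant} on a cycle, but to conclude ``same product, hence same switching class'' you also need that this invariant is \emph{complete} for cycles (equivalently, that a cycle has exactly two switching classes, or that $(C_{2k+1},\sigma)$ is antibalanced iff $(C_{2k+1},-\sigma)$ is balanced, which here follows from $(-1)^{2k+1}\prod_e\sigma(e)=+1$ together with the cited characterization of balance). This is standard and the paper itself uses the analogous fact without proof when it switches an unbalanced cycle to the one-negative-edge representative, so it is not a genuine gap, but a sentence acknowledging it would make your argument airtight.
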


\begin{proof}
1. If $(C_{2k+1},\sigma)$ is balanced, then $(C_{2k+1},\sigma)$ is switching equivalent to $(C_{2k+1},+)$, hence, $\chi_c((C_{2k+1},\sigma))=2+\dfrac{1}{k}$. If $(C_{2k+1},\sigma)$ is unbalanced, then $(C_{2k+1},\sigma)$ is switching equivalent to $C_{2k+1}$ which has one negative edge say, $uv$. Thus, we can assign to vertex $u$ and $v$ color 1, and to other vertices colors $3$ and $1$ alternatively. We thereby get a $(4,2)$-coloring of $(C_{2k+1},\sigma)$, i.e., $\chi_c((C_{2k+1},\sigma))=2$. And it is easy to check $(C_{2k+1},\sigma)$ has a $(3,1)$-coloring, but can not be colored properly by two colors, therefore, $\chi((C_{2k+1},\sigma))=3$.

2. If $G$ is bipartite, then it can be colored with colors 0 and 1 and therefore, $\chi((G,\sigma)) = 2$. If $\chi((G,\sigma)) = 2$, then, since both colors are self-inverse in $\mathbb{Z}_2$,
both color classes are independent sets. Hence, $G$ is bipartite. Since $\chi((G,\sigma)), \chi_c((G,\sigma)) \geq 2$, it follows that $\chi((G,\sigma)) = \chi_c((G,\sigma))$ if 
$G$ is bipartite.
\end{proof}

\subsection{Colorings with "signed" colors}

Next, we relate our parameter to the colorings which are considered in \cite{MRS_2014}. The definitions are given in section \ref{Intro}.

\begin{proposition} \label{pm}
If $(G,\sigma)$ is a signed graph, then $\chi_{\pm}((G,\sigma)) - 1 \leq \chi((G,\sigma)) \leq \chi_{\pm}((G,\sigma)) + 1$.
\end{proposition}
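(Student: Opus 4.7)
The plan is to prove the two inequalities separately by exhibiting, in each parity case, an explicit map between the color alphabets $\mathbb{Z}_k$ and $M_n$ that commutes with negation, thereby translating a valid coloring of one kind into a valid coloring of the other.

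For $\chi_{\pm}((G,\sigma)) \leq \chi((G,\sigma)) + 1$, I would start from a $(k,1)$-coloring $c$ of $(G,\sigma)$ with $k=\chi((G,\sigma))$, and define $\phi:\mathbb{Z}_k\to\mathbb{Z}$ by $\phi(i)=i$ for $0\leq i\leq \lfloor k/2\rfloor$ and $\phi(i)=i-k$ for $\lfloor k/2\rfloor<i\leq k-1$. When $k$ is odd, $\phi$ is a bijection onto $M_k$; when $k$ is even, the image is contained in $M_{k+1}$, missing only $-k/2$. A short parity case-check shows that an edge violation $\phi(c(u))=\sigma(e)\phi(c(v))$ (as integers in $M_n$) would force $c(u)\equiv \sigma(e)c(v)\pmod{k}$, contradicting the validity of $c$. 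Hence $\phi\circ c$ is a valid $n$-coloring for $n\in\{k,k+1\}$.

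For $\chi((G,\sigma)) \leq \chi_{\pm}((G,\sigma)) + 1$, I would start from an $n$-coloring $c$ of $(G,\sigma)$ with $n=\chi_{\pm}((G,\sigma))$, and define $\psi$ to be reduction modulo $n$ when $n$ is odd (giving a bijection $M_n\to\mathbb{Z}_n$), and reduction modulo $n+1$ when $n$ is even (giving an injection $M_n\hookrightarrow \mathbb{Z}_{n+1}\setminus\{0\}$). Since $\psi(-x)\equiv -\psi(x)$ in the target group in both cases, $\psi\circ c$ is a $(k,1)$-coloring for $k\in\{n,n+1\}$.

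The main subtle point is the mismatch between the self-inverse elements of the two alphabets: $\mathbb{Z}_k$ carries an extra self-inverse element $k/2$ when $k$ is even, while $M_n$ carries no self-inverse element at all when $n$ is even. This parity discrepancy is exactly what forces the $+1$ in each inequality, and it is why the element $k/2\mapsto k/2$ must be tracked carefully in the first direction (self-inverse in $\mathbb{Z}_k$ but not in $M_{k+1}$), and why $0\in \mathbb{Z}_{n+1}$ is never attained in the second. Once $\phi$ and $\psi$ are written down, the verification reduces to bounding $|\phi(i)+\phi(j)|$ or $|\psi(x)+\psi(y)|$ against the relevant modulus, which is a routine case analysis on whether the indices lie above or below the midpoint.
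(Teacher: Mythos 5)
Your proposal is correct and follows essentially the same route as the paper: both directions are obtained by explicit negation-compatible maps between $M_n$ and $\mathbb{Z}_n$ (for odd $n$) or $\mathbb{Z}_{n+1}$ (for even $n$), with the paper's $\phi(t)=2k+1+t$ for negative $t$ being exactly your ``reduction modulo $n$ or $n+1$.'' The only difference is that you spell out the direction the paper dismisses as ``analogous,'' and your verification via congruences is sound.
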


\begin{proof} Let $\chi_{\pm}((G,\sigma)) = n$ and $c$ be an $n$-coloring of $(G,\sigma)$ with colors from $M_n$.
 
If $n = 2k+1$, then let 
$\phi : M_{2k+1} \rightarrow \mathbb{Z}_{2k+1}$ with $\phi(t) = t$ if $t \in \{0, \dots, k\}$ and $\phi(t) = 2k+1 + t$ if $t \in \{-k, \dots, -1\}$. Then $c$ is
a $(2k+1)$-coloring of $(G,\sigma)$ with colors from $M_{2k+1}$ if and only if  $\phi \circ c$ is a $(2k+1)$-coloring of $(G,\sigma)$. 
Hence, $\chi((G,\sigma)) \leq \chi_{\pm}((G,\sigma))$.
If $n=2k$, then let $\phi' : M_{2k} \rightarrow \mathbb{Z}_{2k+1}$ with $\phi(t) = t$ if $t \in \{1, \dots, k\}$ and $\phi(t) = 2k+1 + t$ if $t \in \{-k, \dots, -1\}$. 
Then $\phi' \circ c$ is a $(2k+1)$-coloring of $(G,\sigma)$. Hence, $\chi((G,\sigma)) \leq \chi_{\pm}((G,\sigma))+1$.

We analogously deduce that $\chi_{\pm}((G,\sigma)) \leq \chi((G,\sigma)) + 1$.
\end{proof}

The next proposition shows, that the bounds of Proposition \ref{pm} cannot be improved (see Figure 2).

\begin{proposition}
Let $(G,\sigma)$ be a connected signed graph with at least three vertices.
\begin{enumerate}
\item If $(G,\sigma)$ is antibalanced and not bipartite, then $\chi_{\pm}((G,\sigma)) = 2$ and $\chi((G,\sigma)) = 3$.
\item If $(G,\sigma)$ is bipartite but not antibalanced, then $\chi_{\pm}((G,\sigma)) = 3$ and $\chi((G,\sigma)) = 2$.
\end{enumerate}
\end{proposition}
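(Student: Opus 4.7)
My plan is to exploit results already proved earlier in the paper so that only the two $\chi_{\pm}$ values really need arguing. The $\chi$ halves are essentially free: in part 1, Statement 1 of Theorem \ref{not<} already gives $\chi((G,\sigma))=3$ when $(G,\sigma)$ is antibalanced and not bipartite; in part 2, the earlier proposition showing $\chi((G,\sigma))=2$ whenever $G$ is bipartite handles the $\chi$-value. So I would open the proof by citing these and then focus attention on $\chi_{\pm}$.

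For part 1, I would first use Proposition \ref{switching}-style switching invariance (which applies to the definition of $\chi_{\pm}$ as well, since the Máčajová--Raspaud--Škoviera condition is preserved by flipping the sign of a color at a switched vertex) to assume every edge of $(G,\sigma)$ is negative. Then the constant map $c\equiv 1\in M_2=\{-1,+1\}$ gives $c(v)=1\neq -1=\sigma(e)c(w)$ for every edge, so $\chi_{\pm}\le 2$. To rule out $\chi_{\pm}=1$, note that $M_1=\{0\}$ and the only possible color $0$ is its own negative; since the hypothesis (connected, at least three vertices) guarantees at least one edge, no $1$-coloring exists, so $\chi_{\pm}=2$.

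For part 2, the bound $\chi_{\pm}\le 3$ is immediate from Proposition \ref{pm} and $\chi((G,\sigma))=2$. The heart of the argument is the matching lower bound $\chi_{\pm}\ge 3$, which I would prove by contradiction. Suppose $c\colon V(G)\to\{-1,+1\}$ is a valid $2$-coloring. Let $A=c^{-1}(+1)$ and $B=c^{-1}(-1)$. Edge by edge the condition $c(u)\neq\sigma(uv)c(v)$ translates into: positive edges run between $A$ and $B$, while negative edges stay inside $A$ or inside $B$. Now switch at every vertex of $B$; an edge with one endpoint in $B$ flips sign, while an edge with zero or two endpoints in $B$ keeps its sign. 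A short case analysis shows that in every case the resulting edge is negative, so $(G,\sigma)$ is equivalent to an all-negative signature, i.e., antibalanced. This contradicts the hypothesis. Excluding $\chi_{\pm}=1$ is identical to part 1, so $\chi_{\pm}=3$.

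I do not foresee a real obstacle; the only spot that requires care is the case analysis in the switching step of part 2, where one has to verify that \emph{both} the formerly-positive edges and the formerly-negative edges end up negative after switching $B$. Everything else is either a direct application of a cited earlier result or a one-line check.
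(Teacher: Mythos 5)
The paper actually states this proposition without proof (only Figure~2 is offered as an illustration), so there is nothing to compare your argument against; judged on its own, your proof is correct and complete. The switching-invariance of $\chi_{\pm}$ that you invoke does hold for exactly the reason you give ($M_n$ is closed under negation, and flipping the sign of the color at the switched vertex preserves the condition $c(v)\neq\sigma(e)c(w)$), the exclusion of $\chi_{\pm}=1$ via $M_1=\{0\}$ being self-inverse is right, and the heart of part~2 --- that a valid $M_2$-coloring forces positive edges across the bipartition $(A,B)$ and negative edges inside the parts, so that switching at $B$ yields an all-negative signature --- is exactly the clean characterization ``$\chi_{\pm}((G,\sigma))\le 2$ if and only if $(G,\sigma)$ is antibalanced'' that the proposition implicitly rests on; you might even state it as a standalone observation, since it makes both parts immediate. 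The only cosmetic remark: in part~2 the case $B=\emptyset$ (or $A=\emptyset$) should be noted as degenerate but harmless, since then all edges are already negative and the same contradiction results. Your reliance on Theorem~\ref{not<}(1) for $\chi((G,\sigma))=3$ and on the concluding-remarks proposition for $\chi((G,\sigma))=2$ is legitimate, as both are proved earlier in the paper.
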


\begin{figure}[hh]
	\centering
	\includegraphics[width=14cm]{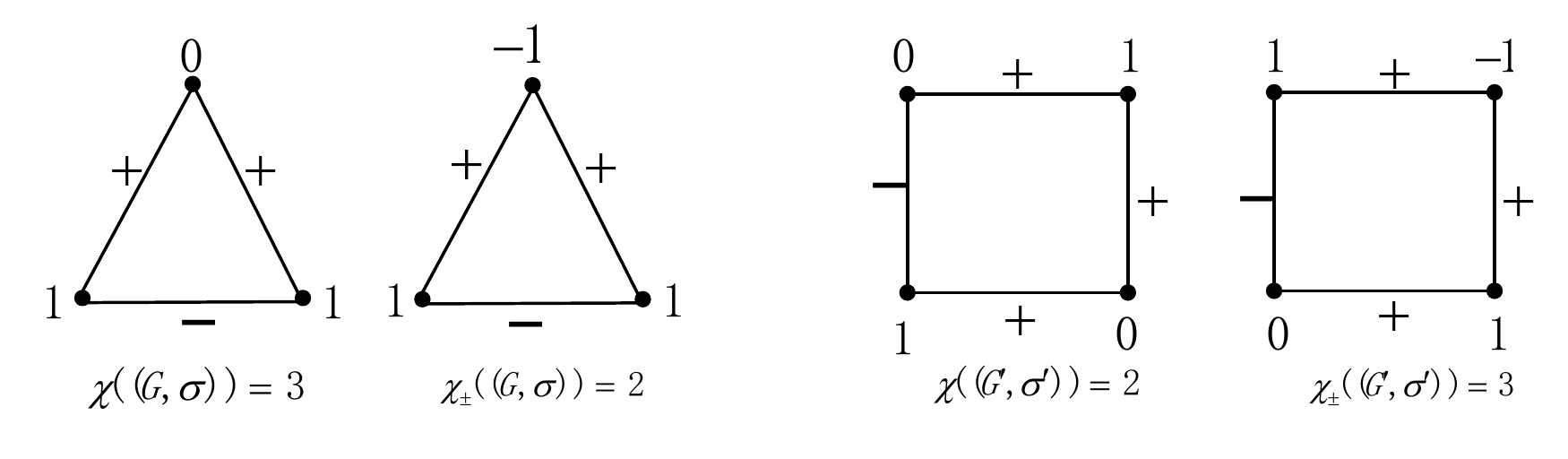}\\
	\caption{relation between $\chi$ and $\chi_{\pm}$}\label{Fig2}
\end{figure}


\begin{thebibliography}{99}

\bibitem{Bondy_Hell_1990}{\sc J. A. Bondy, P. Hell}, A note on the star chromatic number, J. Graph Theory {\bf 14} (1990) 479 - 482


\bibitem{MRS_2014} {\sc E. M\'a\v cajov\'a, A. Raspaud, M. \v Skoviera}, The chromatic number of a signed graph, arXiv:1412.6349v1 (2014)

\bibitem{Raspaud_Zhu_2011} {\sc A.~Raspaud, X.~Zhu}, Circular flow on signed graphs, J.~Comb.~ Theory Ser.~B {\bf 101} (2011) 464 - 479


\bibitem{Vince_1988}{\sc A.~Vince}, Star chromatic number, J. Graph Theory {\bf 12} (1988) 551 - 559

\bibitem{Zaslavsky_1982} {\sc T. Zaslavsky},
    Signed graph coloring, Discrete Math. {\bf 39} (1982) 215-228

\bibitem{Zaslavsky_1982_2} {\sc T. Zaslavsky}, Chromatic invariants of signed graphs, Discrete Math.~{\bf 42} (1982) 287-312

\bibitem{Zaslavsky_1984} {\sc T. Zaslavsky},
    How colorful the signed graph?, Discrete Math. {\bf 52} (1984) 279-284

\bibitem{Zhu_1992} {\sc X.~Zhu}, Star chromatic numbers and product of graphs, J. Graph Theory {\bf 16} (1992) 557-569

\bibitem{Zhu_2001} {\sc X.~Zhu}, Circular chromatic number: a survey, Discrete Math. {\bf 229} (2001) 371-410

\bibitem{Zhu_2005} {\sc X.~Zhu}, Recent developments in circular colouring of graphs, in 
M.~Klazar, J.~Kratochvíl, M.~Loebl, J.~Matoušek, P.~Valtr, R.~Thomas (eds.) Topics in Discrete Mathematics
Algorithms and Combinatorics Volume {\bf 26}, Springer Berlin Heidelberg (2006) 497-550
\end{thebibliography}
\end{document}